\documentclass[12pt,a4paper]{article}



\usepackage{graphicx}
\usepackage{amssymb}
\usepackage{amsmath}
\usepackage{epstopdf}

\usepackage{float}

\usepackage{caption}
\usepackage{subcaption}

\usepackage[margin=0.8in]{geometry} 



\usepackage{subfloat}

\title{A transform pair for bounded convex planar domains}

\author{J. J. Hulse$^{1}$, L. Lanzani$^{1,2}$, S. G. Llewellyn Smith$^{3,4}$ and E. Luca$^{5}$}

\date{}

\graphicspath{{Fig/}}





\numberwithin{equation}{section}

\usepackage{pst-plot,booktabs,mathtools}
\usepackage{amsfonts, amsthm, xcolor}
\usepackage{graphicx}
\newcommand{\A}{\mathcal{A}}
\newcommand{\B}{\mathcal{B}}

\newcommand{\dd}{\mathrm{d}}
\newcommand{\ee}{\mathrm{e}}
\newcommand{\ci}{\mathrm{i}}
\DeclareMathOperator*{\Arg}{arg}

\DeclareMathOperator*{\Imag}{Im}
\DeclareMathOperator*{\Real}{Re}

\newtheorem{lem}{Lemma}
\newtheorem{cor}{Corollary}
\newtheorem{defi}{Definition}
\newtheorem{thm}{Theorem}

\usepackage{color}

\usepackage{subcaption}

\begin{document}

\maketitle


\begin{center}
$^{1}$Department of Mathematics \\
Syracuse University \\
Syracuse, NY 13244-1150, USA
\end{center}

\begin{center}
$^{2}$Department of Mathematics \\
University of Bologna \\
Bologna, Italy
\end{center}

\begin{center}
$^{3}$Department of Mechanical and Aerospace Engineering \\
Jacobs School of Engineering, UCSD \\
La Jolla, CA 92093-0411, USA.
\end{center}

\begin{center}
$^{4}$Scripps Institution of Oceanography, UCSD \\
La Jolla, CA 92039-0213, USA.
\end{center}

\begin{center}
$^{5}$Climate and Atmosphere Research Center \\
The Cyprus Institute \\
Nicosia, 2121, Cyprus \\
\vskip 0.05truein 
Corresponding author: {\tt e.louca@cyi.ac.cy}
\end{center}

\vskip 0.5truein

\begin{center}
{\bf Abstract}
\end{center}
\noindent
A new transform pair which can be used to solve mixed boundary value problems for Laplace's equation and the complex Helmholtz equation in bounded convex planar domains is presented. This work is an extension of Crowdy (2015, CMFT, 15, 655--687) where new transform techniques were developed for boundary value problems for Laplace's equation in circular domains. The key ingredient of the method is the analysis of the so called global relation which provides a coupling of integral transforms of the given boundary data and of the unknown boundary values.  Three problems which involve mixed boundary conditions are solved in detail, as well as numerically implemented, to illustrate how to apply the new approach.

\vspace{1cm}
\noindent
\textit{Keywords: }transform pair; harmonic function; mixed boundary value problem.

\vfill\eject

\section{Introduction}

The Unified Transform Method (UTM) - a method for analysing boundary value problems for linear and integrable nonlinear PDEs - was pioneered in the late '90s by A.S. Fokas \cite{Fokas1997}. From the very beginning, the UTM has attracted a great deal of interest in the applied mathematics community. A multitude of versions of the original method have since been developed, each dealing with a specific family of equations.

For the Laplace, biharmonic, Helmholtz and modified Helmholtz equations in convex polygonal domains, the UTM provides integral representations of the solutions in the complex Fourier plane (Fokas \& Kapaev \cite{FokasKapaev2003}, Crowdy \& Fokas \cite{CrowdyFokas2004}, Dimakos \& Fokas \cite{DimakosFokas2015}, Spence \& Fokas \cite{SpenceFokas2010}, Davis \& Fornberg \cite{DavisFornberg2014}). Specifically, for Laplace's equation, Fokas \& Kapaev \cite{FokasKapaev2003} developed a transform method for solving boundary value problems in simply connected polygonal domains. Their original approach relied on a variety of tools (spectral analysis of a parameter-dependent ODE; Riemann--Hilbert techniques, etc.). It was later observed by Crowdy \cite{Crowdy:2015} that the method can be recast within a complex function-theoretic framework; this, in turn, lead to the development of a new transform method applicable to so-called circular domains (domains bounded by arcs of circles, with line segments being a special case). Colbrook \cite{Colbrook:2020} extended the unified transform method to curvilinear polygons and PDEs with variable coefficients. In addition, Colbrook {\em et al.} \cite{ColbrookFokasHash2019} presented a hybrid analytical-numerical technique for elliptic PDEs based on the UTM, providing a fast and efficient method to evaluate the solution in the interior domain.

The focus of the present study is the extension of the original approach of Fokas \& Kapaev \cite{FokasKapaev2003} for convex polygons, to arbitrary convex domains. The method is built upon Crowdy's \cite{Crowdy:2015} construction and it develops a new transform method for any convex bounded domain; this includes domains that may be non-circular or non-polygonal, such as ellipses. This study was motivated by engineering applications, in particular heat exchangers (namely the shell-and-tube exchangers) which have elliptical cross section (Saunders \cite{Saunders1988}) and the need of mathematical tools and transform methods to analyse problems in such geometries. 

In \S 2, we present the theoretical framework needed to formulate the new transform pair for analytic functions in bounded convex planar domains. The new transform pair is presented in \S 3. The next step involves implementing the new transform in a variety of mixed boundary value problems (\S 4--6). In \S 7, we present the formulation of a transform pair for the complex Helmholtz equation. Finally, we conclude and discuss further applications in \S 8.

\section{Theoretical framework}\label{sec2}
\subsection{Preliminaries} 


A {\em trapezoid} is defined here to be a quadrilateral with at least one pair of parallel sides (there exist various definitions \cite{Joseffson:2013}). It follows from this definition that a trapezoid is a convex domain, given  that the sum of the interior angles of a trapezoid is equal to $2\pi$.

\begin{lem}\label{L:2}
Let $\Omega\subset \mathbb{C}$ be a bounded convex domain.  For any $z_0 \in \Omega$, there exists a trapezoid $T=T(z_0)$ with the following properties:
\begin{enumerate}
\item Point $z_0$ is contained in the interior of $T$.
\item Two (parallel) sides of $T$ are each parallel to a coordinate axis.
\item The vertices of $T$ lie on the boundary of $\Omega$.
\item The closure of $T$ is contained in the closure of $\Omega$.
\end{enumerate}
\end{lem}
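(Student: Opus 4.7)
The plan is to construct $T$ by taking the endpoints of two horizontal chords of $\Omega$, one at a height slightly above $z_0$ and one slightly below. Write $z_0 = x_0 + iy_0$, and for each admissible height $y$ define
\[
a(y) = \inf\{x \in \mathbb{R} : x + iy \in \overline{\Omega}\}, \qquad
b(y) = \sup\{x \in \mathbb{R} : x + iy \in \overline{\Omega}\}.
\]
These are attained, since $\overline{\Omega}$ is compact, and the resulting points $a(y)+iy$ and $b(y)+iy$ lie on $\partial\Omega$. Because $z_0$ is an interior point of $\Omega$, there exist $r,\delta>0$ such that the open rectangle $(x_0 - r, x_0 + r) \times (y_0 - \delta, y_0 + \delta)$ is contained in $\Omega$; in particular $a(y) \leq x_0 - r$ and $b(y) \geq x_0 + r$ for every $y \in (y_0 - \delta, y_0 + \delta)$.

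Fix any $\epsilon \in (0,\delta)$, set $y_1 = y_0 + \epsilon$ and $y_2 = y_0 - \epsilon$, and introduce
\[
A_j = a(y_j) + i y_j, \qquad B_j = b(y_j) + i y_j, \qquad j = 1,2.
\]
I would then take $T$ to be the quadrilateral with vertices $A_1, B_1, B_2, A_2$ in cyclic order. By construction, the two sides $A_1 B_1$ and $A_2 B_2$ are parallel horizontal segments, which gives property (2); the four vertices lie on $\partial\Omega$ by the definitions of $a,b$, giving (3); and since $\overline{T}$ is the convex hull of four points of the convex set $\overline{\Omega}$, property (4) follows. The cyclic ordering avoids self-intersection because $A_1,B_1$ and $A_2,B_2$ sit on disjoint parallel lines with $a(y_j) < b(y_j)$.

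For property (1), the strict inequalities $a(y_j) < x_0 < b(y_j)$ imply that the vertical line $\mathrm{Re}\,z = x_0$ meets $T$ in an open segment whose $y$-range is exactly $(y_2, y_1)$, and $y_0$ lies strictly inside this interval, so $z_0 \in \mathrm{int}(T)$. The main technical ingredient is the existence of a rectangular neighborhood of $z_0$ inside $\Omega$, which is immediate from $\Omega$ being open; convexity of $\Omega$ enters only to conclude $\overline{T} \subset \overline{\Omega}$ in (4). No continuity or concavity properties of $a,b$ as functions of $y$ are needed for the argument, so the proof should be essentially routine once the construction above is written down carefully.
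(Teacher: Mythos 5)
Your proof is correct and takes essentially the same route as the paper's: inscribe a small axis-parallel rectangle around $z_0$ inside $\Omega$, extend its two horizontal sides to full chords of $\Omega$, and take the four chord endpoints as the vertices of the trapezoid, with convexity of $\Omega$ supplying $\overline{T}\subset\overline{\Omega}$. Your version merely makes the paper's ``extend the horizontal sides until their vertices reach the boundary'' step explicit via the functions $a(y)$ and $b(y)$.
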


\begin{proof} First, note that a trapezoid satisfying properties 1--4 is not uniquely determined. To construct one such trapezoid, we first find an open disk lying in $\Omega$ containing $z_0$. In this disk, we inscribe a rectangle that contains $z_0$ with sides parallel to the coordinate axes. Ignoring, say, the vertical sides, we extend the horizontal sides of the rectangle until their vertices reach the boundary of $\Omega$. There is now only one way to connect these new vertices with straight line segments as to form a trapezoid that satisfies properties 1--4. Finally, the convexity assumption on $\Omega$ ensures that the closure of $T$ lies in the closure of $\Omega$.
\end{proof}

\noindent
{\it Remark 1:} Without loss of generality, we assume
that a coordinate system has been fixed where the coordinate axis of property 2 in Lemma \ref{L:2} is the $x$-axis. 

\vskip0.05in

\noindent
{\it Remark 2:} The vertices of $T$ partition the boundary of $\Omega$ into four adjacent arcs, where each arc is subtended by precisely one side of $T$.

\begin{lem}
\label{E:limit}
For any $z,\zeta \in \mathbb{C}$ such that $0<\Arg(z-\zeta)<\pi$, we have
\begin{equation}\label{E:limit2}
\lim_{t\rightarrow +\infty }\ee^{\ci t(z-\zeta)}=0.
\end{equation}
Here $\ci$ is the imaginary unit. Hence
\begin{equation}\label{E:kernel}
 \frac{1}{\zeta-z}\ =\ci \int\limits_0^\infty \ee^{\ci t(z-\zeta)}\dd t.
 \end{equation}
\end{lem}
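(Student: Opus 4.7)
The plan is to reduce the complex exponential decay to a real exponential decay via the argument condition, and then to obtain the integral formula as a direct antiderivative computation.

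First I would set $w := z - \zeta$ and write $w = |w|\,\ee^{\ci\theta}$ with $\theta = \Arg(w) \in (0,\pi)$. The key observation is that the hypothesis $0 < \Arg(z-\zeta) < \pi$ is exactly the statement $\Imag(w) > 0$. Expanding
\begin{equation*}
\ee^{\ci t(z-\zeta)} \;=\; \ee^{\ci t \Real(w)}\,\ee^{-t\,\Imag(w)},
\end{equation*}
and taking moduli gives $|\ee^{\ci t(z-\zeta)}| = \ee^{-t\,\Imag(w)}$. Since $\Imag(w) > 0$, the right-hand side tends to $0$ as $t \to +\infty$, proving \eqref{E:limit2}.

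For the integral identity \eqref{E:kernel} I would appeal to the fundamental theorem of calculus. Since $z \neq \zeta$, the function $t \mapsto \ee^{\ci t(z-\zeta)}/\bigl(\ci(z-\zeta)\bigr)$ is an antiderivative of $\ee^{\ci t(z-\zeta)}$ in $t$. The decay just established ensures that the upper limit vanishes, and the lower limit contributes $1/\bigl(\ci(z-\zeta)\bigr)$; to justify convergence of the improper integral, one also notes that $|\ee^{\ci t(z-\zeta)}| = \ee^{-t\,\Imag(w)}$ is integrable on $[0,\infty)$ under our hypothesis. Collecting signs,
\begin{equation*}
\ci \int_0^\infty \ee^{\ci t(z-\zeta)}\,\dd t \;=\; \ci \left[0 - \frac{1}{\ci(z-\zeta)}\right] \;=\; \frac{1}{\zeta - z},
\end{equation*}
which is \eqref{E:kernel}.

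There is no serious obstacle in this argument; the only subtlety to double-check is bookkeeping of the sign and factor of $\ci$ in the antiderivative, and the verification that the hypothesis $\Arg(z-\zeta) \in (0,\pi)$ is used precisely to guarantee strict positivity of $\Imag(z-\zeta)$, which is what makes both the pointwise limit and the convergence of the improper integral work.
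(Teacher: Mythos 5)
Your proof is correct and is precisely the computation the paper has in mind (the paper's own proof is just the remark ``The proof is a computation''). The reduction of the argument condition to $\Imag(z-\zeta)>0$, the modulus estimate $|\ee^{\ci t(z-\zeta)}|=\ee^{-t\,\Imag(z-\zeta)}$, and the antiderivative evaluation are all exactly right, including the signs.
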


\begin{proof}
The proof is a computation.
\end{proof}


 \subsection{A labelling scheme} 
 
For domain $\Omega$ and trapezoid $T$ introduced in Lemma \ref{L:2}, we let $T_k$ and $v_k$, $k=1,\ldots, 4$ denote the sides and vertices of $T$, respectively, where each label $k$ is assigned as follows. We label the lowest horizontal side $T_1$, and then label the
remaining sides in ascending order as we travel along boundary of $T$ in the counterclockwise direction starting from $T_1$. For the vertices of $T$, we set 
$v_1$ to be the leftmost vertex of the side $T_1$ and then label the remaining vertices in ascending order as we travel along the boundary of $T$ in the counterclockwise direction. 
Finally, the four arcs that partition the boundary of $\Omega$ (see Remark 2) are denoted $I_k$, $k=1,\ldots, 4$, where $I_k$ is subtended by $T_k$; see Figures \ref{Fig:1}--\ref{Fig:2}. \\

\noindent
{\it Remark 3:} Since $\Omega$ is convex, we have that $I_k$ and the interior of $T$ lie on opposite sides of the line determined by $T_k$ for each $k=1,\ldots, 4$.

\begin{figure*}[t!]
    \centering
    \begin{subfigure}[t]{0.5\textwidth}
        \centering
        \[\includegraphics[scale=0.8] {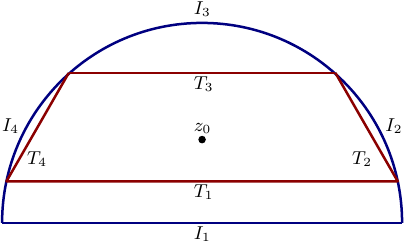} \]
\caption{{\em A trapezoid $T$ (in red) inscribed in $\Omega$ (in blue), with labelling scheme.}}
\label{Fig:1}
    \end{subfigure}%
    ~ 
    \begin{subfigure}[t]{0.5\textwidth}
        \centering
  \[\includegraphics[scale=0.8] {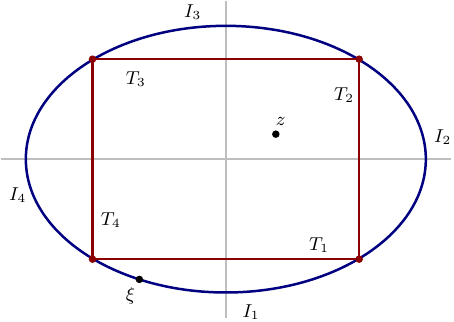} \]
  \caption{{\em A trapezoid $T$ (in red) inscribed in another $\Omega$ (in blue), with labelling scheme.}}
  \label{Fig:2}
    \end{subfigure}
    \caption{Example domains $T$ and $\Omega$.}
\end{figure*}

\begin{lem}\label{L:3}
Let $\Omega\subset\mathbb C$ be a given bounded convex domain; let $T$ be a trapezoid as in Lemma \ref{L:2}, and let $\{I_1,\ldots, I_4\}$ be the corresponding partition of the boundary of $\Omega$. Then, for any $\alpha\in T$ and for any $k=1,\ldots, 4$ there is $\beta_k\in [0, 2\pi)$
such that the conformal affine map 
\begin{equation}\label{E:Psi}
\Psi_k (w) \coloneqq \ee^{-\ci \beta_k}(w-\alpha),\quad w \in \mathbb{C},
\end{equation}
has
\begin{equation}\label{E:in1}
\Imag{(\Psi_k(\zeta))}<0,
\end{equation}
and
\begin{equation}\label{E:in2}
\Imag(\Psi_k(\zeta))<\Imag(\Psi_k(z)),
\end{equation}
for any  $\zeta \in I_k$ and for any $z\in \textrm{int}(T)$.
\end{lem}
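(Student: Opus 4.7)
My plan is to make a geometry-driven choice of $\beta_k$. For each $k=1,\ldots,4$, let $v_k$ and $v_{k+1}$ (indices mod $4$) be the endpoints of the side $T_k$ in counterclockwise order along $\partial T$, and set
\begin{equation*}
\ee^{\ci \beta_k} \;\coloneqq\; \frac{v_{k+1}-v_k}{|v_{k+1}-v_k|},\qquad \beta_k\in[0,2\pi).
\end{equation*}
With this choice, multiplication by $\ee^{-\ci\beta_k}$ rotates the line carrying $T_k$ to be parallel to the real axis, orienting $T_k$ so that the counterclockwise direction along $\partial T$ becomes the positive real direction. Because $\textrm{int}(T)$ lies to the left of $T_k$ under this traversal, it is mapped under $\Psi_k$ strictly above the image of the line $\mathcal{L}_{T_k}$ that carries $T_k$.

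Next, I would translate the picture into the $\Psi_k$ frame. Define $h_k\coloneqq\Imag\bigl(\ee^{-\ci\beta_k}(w_0-\alpha)\bigr)$ for any $w_0\in T_k$; this value is independent of the choice of $w_0$ because $T_k$ has been rotated to a horizontal segment. Since $\alpha$ lies in the interior of $T$ and $\textrm{int}(T)$ is strictly above $\mathcal{L}_{T_k}$ in the rotated coordinates, we have $h_k<0$, while $\Psi_k(T_k)$ is the horizontal line $\{\Imag w=h_k\}$.

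Combining Remark~3 with convexity of $\Omega$, the arc $I_k$ and $\textrm{int}(T)$ lie on opposite sides of $\mathcal{L}_{T_k}$, so $I_k$ is contained in the closed half-plane below $\mathcal{L}_{T_k}$. Hence $\Imag\Psi_k(\zeta)\le h_k$ for every $\zeta\in I_k$, which together with $h_k<0$ proves (\ref{E:in1}). For (\ref{E:in2}), every $z\in\textrm{int}(T)$ satisfies $\Imag\Psi_k(z)>h_k$, and comparing with $\Imag\Psi_k(\zeta)\le h_k$ yields the required strict inequality.

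The step I expect to be most delicate is securing the \emph{strict} inequality in (\ref{E:in1}) when $\partial\Omega$ has a flat piece extending $\mathcal{L}_{T_k}$ beyond $T_k$: a bare convexity argument only gives $\Imag\Psi_k(\zeta)\le h_k$ on $I_k$, with possible equality along such a flat piece. It is the assumption that $\alpha$ lies in the \emph{interior} of $T$ (which I read as the intended meaning of "$\alpha\in T$", since, for example, $\alpha=v_k$ would sit on $\mathcal{L}_{T_k}$ and defeat the strict inequality) that creates the positive gap $|h_k|>0$ needed to upgrade the weak inequality on $I_k$ to a strict one.
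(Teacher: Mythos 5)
Your proposal is correct and follows essentially the same route as the paper: rotate so that $T_k$ becomes horizontal with $\mathrm{int}(T)$ above it, use Remark~3 (plus convexity) to place $I_k$ weakly below that line, and use the fact that $\alpha$ is an \emph{interior} point of $T$ to obtain the strictly negative gap $h_k$ that upgrades both inequalities --- this is exactly the paper's argument with $h_k$ playing the role of $-\mathrm{dist}(\alpha,T_k)$. Your only additions are the explicit formula for $\ee^{\ci\beta_k}$ via the oriented side direction (the paper merely asserts such a $\beta_k$ exists) and the closing remark about flat boundary pieces, both of which are consistent with, and slightly more careful than, the published proof.
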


 \begin{proof}
Since $\alpha$ is an interior point of $T$, it has positive distance from the boundary of $T$, including $T_k$, that is
\begin{equation}\label{E:L1}
0< \text{dist}(\alpha, T_k) \coloneqq \inf\{|\alpha-z|: z\in T_k\}.
\end{equation}
We now choose $\beta_k\in \mathbb{R}$ so that $\Psi_k$ in \eqref{E:Psi} has
\begin{equation}\label{E:L2}
\Psi_k(T_k)\subset \{z: \Imag\, z = -\text{dist}(\alpha, T_k)\}.
\end{equation}
But the interior of $T$ and $I_k$ lie on opposite sides of $T_k$ (Remark 3.) and affine maps are rigid motions, thus $\Psi_k(int (T))$ and $\Psi_k(I_k)$ must lie on opposite sides of $\Psi_k(T_k)$ as well, and we choose $\beta_k\in [0, 2\pi)$
so that 
\begin{equation}\label{E:L3}
\Psi_k(I_k)\subset \{w:  \Imag\, w\leq -\text{dist}(\alpha, T_k)\},
\end{equation}
whereas
\begin{equation}\label{E:L4}
\Psi_k(int(T)) \subset \{w: \Imag\, w> -\text{dist}(\alpha, T_k)\}.
\end{equation}
Now \eqref{E:in1} follows from \eqref{E:L1} and \eqref{E:L3}, while \eqref{E:L1} and \eqref{E:L4} give \eqref{E:in2}.
 \end{proof}

 \begin{cor}\label{C:argument}
 With same notations and hypotheses as above, we have that
 \begin{equation}\label{E:argument}
 0<\Arg\big(\Psi_k(z)-\Psi_k(\zeta)\big)<\pi,
 \end{equation}
 for any $z\in int (T)$ and any $\zeta\in I_k$.
 \end{cor}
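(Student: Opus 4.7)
The plan is to observe that \eqref{E:argument} is essentially a restatement of the strict inequality \eqref{E:in2} from Lemma \ref{L:3}, once we recall the standard characterization of the principal argument. Specifically, for any nonzero complex number $w$, the condition $0<\Arg(w)<\pi$ is equivalent to $\Imag(w)>0$. Thus the corollary reduces to showing that $\Imag\bigl(\Psi_k(z)-\Psi_k(\zeta)\bigr)>0$ for every $z\in\mathrm{int}(T)$ and every $\zeta\in I_k$.

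First, I would use linearity of the imaginary part to write
\begin{equation*}
\Imag\bigl(\Psi_k(z)-\Psi_k(\zeta)\bigr)=\Imag(\Psi_k(z))-\Imag(\Psi_k(\zeta)).
\end{equation*}
By inequality \eqref{E:in2} in Lemma \ref{L:3}, this difference is strictly positive. In particular, $\Psi_k(z)-\Psi_k(\zeta)\neq 0$, so $\Arg\bigl(\Psi_k(z)-\Psi_k(\zeta)\bigr)$ is well-defined, and the value lies in the open upper half-plane argument range $(0,\pi)$, which is exactly \eqref{E:argument}.

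There is no real obstacle here: the hard geometric work was already carried out in Lemma \ref{L:3}, where the rotation angle $\beta_k$ was chosen precisely so that $T_k$ is mapped into the horizontal line $\{\Imag w=-\mathrm{dist}(\alpha,T_k)\}$, with $I_k$ below it and $\mathrm{int}(T)$ above it. Corollary \ref{C:argument} is the direct complex-analytic reformulation of the separating-line property, phrased in the form needed to invoke Lemma \ref{E:limit} at a later stage when exponential kernels of the type $\ee^{\ci t(\Psi_k(z)-\Psi_k(\zeta))}$ must decay as $t\to +\infty$.
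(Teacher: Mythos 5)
Your proof is correct and follows exactly the route the paper intends: the paper itself remarks that the corollary is ``just a reformulation of \eqref{E:in2},'' and your observation that $0<\Arg(w)<\pi$ is equivalent to $\Imag(w)>0$, combined with the linearity of $\Imag$ and the strict inequality \eqref{E:in2}, is precisely that reformulation. Nothing is missing.
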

 This corollary is just a reformulation of $\eqref{E:in2}$ in such a way that it highlights the geometric condition needed to apply Lemma 2.
%

 \section{A new transform pair}
 
With same notations as before, we may now give the following definitions. Let $\Omega$ be a simply connected and bounded domain. Let $C_1, C_2,\dots$ be a sequence of rectifiable Jordan curves that converges to $\partial \Omega$ in the sense that each compact subdomain of $\Omega$ is eventually contained in $C_n$ for large enough $n$. For $0 < p\leq \infty$, we say that a holomorphic function $f\in O(\Omega)$ is in the Hardy Space $E^p(\Omega)$ (also known as Smirnov Class), if there is a positive and finite constant $M=M(f)$ such that
\begin{equation}
\int_{C_n} |f(\zeta)|^p \dd \sigma<M<\infty,
\end{equation}
for all $n$ and where $\dd \sigma$ is arclength. If we further assume that  $\Omega$ is bounded by a rectifiable Jordan curve, then we have that $f$ has a nontangential limit on $\partial\Omega$ $d\sigma$ almost everywhere which we also denote by $f$ and
\begin{equation}
\int_{\partial\Omega}|f|^p \dd \sigma(\zeta)<\infty.
\end{equation}
Note that $E^p(\Omega)$ will contain $O(\Omega)\cap C(\overline{\Omega})$. We have the following classical result found on page 170 of Duren's \textit{Theory of $H^p$ Spaces} \cite{Duren:1970}.

\begin{thm}
If $f\in E^1(\Omega)$, then
\begin{equation}
f(z)=\frac{1}{2\pi i}\int_{\partial\Omega}\frac{f(\zeta)}{\zeta-z}\dd \zeta, \qquad z \in \Omega,
\end{equation}
and the integral vanishes for all $z$ outside of $\partial\Omega$.

Conversely, if $g\in L^1(\partial \Omega)$ and 
\begin{equation}
\int_{\partial \Omega}\zeta^ng(\zeta)\dd \zeta=0, \qquad n=0,1,2,\dots,
\end{equation}
then
\begin{equation}
f(z)=\frac{1}{2\pi \ci}\int_{\partial\Omega}\frac{g(\zeta)}{\zeta-z} \dd \zeta\in E^1(\Omega),
\end{equation}
and $g$ coincides $\dd \sigma$ almost everywhere on $\partial \Omega$ with the nontangential limit of $f$.
\end{thm}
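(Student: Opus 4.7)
For the forward implication, the strategy is to apply the ordinary Cauchy integral formula on each approximating Jordan curve $C_n$ and then pass to the limit. Fix $z\in\Omega$; for all sufficiently large $n$ the point $z$ lies interior to $C_n$, so Cauchy's formula gives
\begin{equation*}
f(z)=\frac{1}{2\pi\ci}\int_{C_n}\frac{f(\zeta)}{\zeta-z}\,\dd\zeta,
\end{equation*}
while for $z$ outside $\overline{\Omega}$ the integrand is holomorphic throughout the interior of $C_n$ for large $n$, and Cauchy's theorem yields zero. The real work is passing from $C_n$ to $\partial\Omega$. Here I would choose a specific exhausting family, the images under a Riemann map $\phi\colon\mathbb D\to\Omega$ of the circles $|w|=1-1/n$ (these are rectifiable because $\Omega$ has rectifiable boundary, i.e.\ $\phi'\in H^1(\mathbb D)$, so $\Omega$ is a Smirnov domain). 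Transplanting to the disk via $\zeta=\phi(\omega)$, one converts the $C_n$-integral into an integral over $|\omega|=1-1/n$ of $f(\phi(\omega))\phi'(\omega)$; the $E^1$-hypothesis on $f$ translates into $(f\circ\phi)\phi'\in H^1(\mathbb D)$, and classical Hardy-space theory provides the nontangential boundary trace and $L^1$-convergence required to take the limit.

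For the converse, define $F(z):=\frac{1}{2\pi\ci}\int_{\partial\Omega}g(\zeta)(\zeta-z)^{-1}\dd\zeta$ for $z\in\mathbb C\setminus\partial\Omega$. Differentiation under the integral sign shows $F$ is holomorphic off $\partial\Omega$. To prove $F\equiv 0$ on the unbounded component, expand $(\zeta-z)^{-1}=-\sum_{n\geq 0}\zeta^n z^{-(n+1)}$ for $|z|>\sup_{\partial\Omega}|\zeta|$ and integrate term by term against $g$:
\begin{equation*}
F(z)=-\frac{1}{2\pi\ci}\sum_{n=0}^{\infty}z^{-(n+1)}\int_{\partial\Omega}\zeta^n g(\zeta)\,\dd\zeta=0
\end{equation*}
by the moment hypothesis. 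Since the exterior of $\overline\Omega$ is connected and $F$ is holomorphic on it and vanishes near $\infty$, the identity principle forces $F\equiv 0$ there.

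The two remaining assertions, $F\in E^1(\Omega)$ and the coincidence of the interior nontangential limit of $F$ with $g$ almost everywhere, would both be extracted from the classical Plemelj--Sokhotski jump theory for Cauchy integrals on rectifiable Jordan curves: the boundary traces $F^+$ (from $\Omega$) and $F^-$ (from the exterior) exist almost everywhere and satisfy $F^+-F^-=g$ a.e., so $F^-\equiv 0$ delivers $F^+=g$ a.e. The $E^1$-bound is obtained by pulling back to $\mathbb D$ via the same Riemann map $\phi$ as above and invoking the boundedness of the Cauchy projection on the appropriate weighted Hardy space on the circle (Smirnov's $E^1$ of a rectifiable-boundary domain corresponds to a weighted $H^1$ on the disk with weight $|\phi'|$).

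The step I expect to be the main obstacle is precisely this last $L^1$-control, both in the limit passage in the forward direction and in the $E^1$-membership of $F$ in the converse. The Cauchy singular integral is not bounded $L^1\to L^1$ in general, so neither argument can be run by a naive operator estimate; instead both rely on the finer structure of the Smirnov class (existence of $L^1$ nontangential limits and the Hardy-space control of $\phi'$). This is exactly the nontrivial content packaged in Duren's treatment, and accounts for why the theorem is quoted here rather than rederived from scratch.
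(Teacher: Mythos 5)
First, a point of comparison: the paper does not prove this theorem at all --- it is quoted as a classical result from p.~170 of Duren's \emph{Theory of $H^p$ Spaces} --- so your proposal has to be measured against Duren's argument rather than against anything in the text.

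Your forward direction is sound and is essentially the standard one: apply Cauchy's formula on the level curves $C_n=\phi(\{|w|=1-1/n\})$ and pass to the limit using the equivalence $f\in E^1(\Omega)\Leftrightarrow (f\circ\phi)\,\phi'\in H^1(\mathbb D)$ together with the $L^1(\partial\mathbb D)$-convergence of dilates of $H^1$ functions. The Laurent-expansion argument showing that the Cauchy integral vanishes on the unbounded component is also correct. One small slip: rectifiability of $\partial\Omega$ (i.e.\ $\phi'\in H^1$) does \emph{not} imply that $\Omega$ is a Smirnov domain --- the Keldysh--Lavrentiev construction gives rectifiable Jordan domains whose $\phi'$ is not outer --- but the Smirnov property is not actually needed anywhere in this theorem, and the curves $C_n$ are rectifiable simply because they are analytic.

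The genuine gap is in the converse, and it sits exactly where you flag it. The two assertions still owed --- that $F\in E^1(\Omega)$ and that $F^+=g$ a.e.\ --- are the entire content of that half of the theorem, and neither of the tools you name delivers them. The Plemelj--Sokhotski jump relation for an arbitrary $L^1$ density on an arbitrary rectifiable Jordan curve (a.e.\ existence of the nontangential limits $F^{\pm}$ together with $F^+-F^-=g$) is far from ``classical'' at this level of generality; it is not in Duren and rests on much later, much deeper results about Cauchy transforms on rectifiable sets. More importantly, even granting $F^+=g\in L^1$ a.e., this does not place $F$ in $E^1(\Omega)$: a holomorphic function can have integrable nontangential boundary values without belonging to the Smirnov class, which is precisely the pathology the class is designed to detect. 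Nor can ``boundedness of the Cauchy projection on the appropriate weighted Hardy space'' be the mechanism, since, as you yourself observe, no such $L^1$ bound exists; if it did, the Cauchy integral of \emph{every} $L^1$ density would lie in $E^1(\Omega)$, which is false. The vanishing-moment hypothesis must therefore enter the $E^1$ estimate itself, not merely the exterior-vanishing step. Duren's actual argument threads this needle by a conformal transplantation identity that rewrites the Cauchy integral over $\partial\Omega$ as a Cauchy integral over the unit circle plus a remainder that the orthogonality hypothesis annihilates; that identity, or some substitute for it, is the missing idea, and your proposal ends by deferring to the reference rather than supplying it.
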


Recall that $\dd \sigma=d|\zeta|$ denotes arclength. In particular the theorem above implies that if $f\in E^1(\Omega)$ then $\int_{\partial \Omega}f(\zeta)\dd \zeta=0$. Note that a detailed discussion on Hardy spaces over general domains is given in \cite{Duren:1970}. In the subsequent work, we will be interested in the Hardy space on bounded and convex domains. Convexity of a compact set in $\mathbb{R}^n$ implies the boundary is rectifiable. Thus we need not mention that the boundary of $\Omega$ must be rectifiable. 

\begin{defi}\label{D:definition_spectral-jk}
Let $\Omega\subset\mathbb C$ be a bounded convex domain bounded by a Jordan curve and let $f\in E^1(\Omega)$. The \textbf{spectral matrix $\{\rho_{jk}(t) : t\in\mathbb C\}_{jk}$} has components 
\begin{equation}\label{D:spectral-jk}
\displaystyle{\rho_{jk}(t)=\int\limits_{I_k}f(\zeta)\ee^{-\ci t \ee^{{-\ci \beta_j}}\zeta} \dd \zeta},\quad t\in\mathbb C, \quad j, k=1,\ldots, 4,
\end{equation}
with $I_k$ and $\beta_j$ defined in \S\ref{sec2}. We shall refer to functions $\rho_{jk}(t)$ as \textbf{spectral functions}.
\end{defi}

\begin{lem}\label{L:4}
Using the same notation and assumptions as in Definition \ref{D:definition_spectral-jk} above, the spectral functions $\rho_{jk}(t)$ are entire function for all $j, k=1,\ldots, 4$, and 
\begin{equation}\label{E:global-relation}
\sum_{k=1}^4\rho_{jk}(t)=0, \quad t \in \mathbb{C},\quad j=1,\ldots, 4.
\end{equation}
We refer to \eqref{E:global-relation} as the \textbf{global relations}.
\end{lem}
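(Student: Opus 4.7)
The plan is to establish entireness of each $\rho_{jk}$ directly from its power-series expansion in $t$, and then to derive the global relations by summing over $k$ and invoking the vanishing of boundary integrals of $E^1(\Omega)$ functions quoted from Duren.

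For entireness, I fix $j$ and $k$ and expand the kernel in its Taylor series:
\begin{equation*}
\ee^{-\ci t\,\ee^{-\ci\beta_j}\zeta}\ =\ \sum_{n=0}^\infty \frac{(-\ci\,\ee^{-\ci\beta_j})^n}{n!}\,t^n\,\zeta^n.
\end{equation*}
Since $\Omega$ is bounded, $|\zeta|\leq R$ on $\partial\Omega$ for some $R>0$; for $t$ in any disc $\{|t|\leq\rho\}$ the partial sums of this series are dominated pointwise by $|f(\zeta)|\,\ee^{\rho R}$, which lies in $L^1(I_k,\dd\sigma)$ because $f\in E^1(\Omega)$. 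Dominated convergence then permits an interchange of sum and integral, producing the representation
\begin{equation*}
\rho_{jk}(t)\ =\ \sum_{n=0}^\infty \frac{(-\ci\,\ee^{-\ci\beta_j})^n}{n!}\,c_n^{(k)}\,t^n,\qquad c_n^{(k)}\coloneqq \int_{I_k} f(\zeta)\,\zeta^n\,\dd\zeta,
\end{equation*}
valid on $|t|\leq\rho$. Since $\rho$ is arbitrary and $|c_n^{(k)}|\leq R^n\int_{I_k}|f|\,\dd\sigma$, the series has infinite radius of convergence, so $\rho_{jk}$ is entire.

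For the global relations, I fix $j$ and sum over $k$. By Remark 2, the arcs $I_1,\ldots,I_4$ form (up to the four vertices of $T$, a set of arclength zero) a counterclockwise partition of $\partial\Omega$, so
\begin{equation*}
\sum_{k=1}^4\rho_{jk}(t)\ =\ \int_{\partial\Omega} f(\zeta)\,\ee^{-\ci t\,\ee^{-\ci\beta_j}\zeta}\,\dd\zeta.
\end{equation*}
The kernel is entire in $\zeta$ and therefore bounded by some constant $B=B(t)$ on the compact set $\overline{\Omega}$; multiplying a function in $E^1(\Omega)$ by a bounded holomorphic function preserves the class (holomorphy of the product is clear, and the defining integral bound simply scales by $B$), so the integrand still belongs to $E^1(\Omega)$. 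The remark stated immediately after the $E^1$ Cauchy theorem quoted from Duren, namely that $\int_{\partial\Omega} g\,\dd\zeta=0$ for every $g\in E^1(\Omega)$, then yields the desired vanishing.

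The only mildly technical point is the closure of $E^1(\Omega)$ under multiplication by a bounded holomorphic function; once this is recorded, both conclusions reduce to a Taylor expansion of the kernel combined with the Cauchy-type vanishing result already cited from Duren, and no further analytic input is needed.
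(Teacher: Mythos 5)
Your proposal is correct, and it follows the route the paper itself intends: the paper states Lemma \ref{L:4} without a written proof, but its derivation of the global relation in \S 5 (applying the Cauchy/$E^1$ vanishing theorem quoted from Duren to $F(\zeta)=f(\zeta)\ee^{-\ci t\zeta}$) is exactly your argument, and your power-series justification of entireness together with the observation that $E^1(\Omega)$ is stable under multiplication by a function holomorphic and bounded on $\overline{\Omega}$ correctly supplies the details the paper leaves implicit.
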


 \begin{thm}     
Let $\Omega\subset\mathbb C$ be a bounded convex domain bounded by a Jordan curve, let $T$ be a trapezoid as in Lemma \ref{L:2}, and let $f\in E^1(\Omega)$. Then, using the same notations as above, we have that
\begin{equation}\label{E:repr-f}
f(z)=\frac{1}{2\pi }\sum_{j=1}^4 \int\limits_0^\infty \rho_{jj}(t) \, \ee^{-\ci \beta_j} \ee^{\ci t \ee^{-\ci \beta_j}z} \dd t, \qquad \text{for any } z \in int (T).
\end{equation}
We refer to \eqref{D:spectral-jk} and \eqref{E:repr-f} as the \textbf{transform pair} for bounded convex domains.
\end{thm}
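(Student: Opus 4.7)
The plan is to start from the Cauchy integral representation of $f \in E^1(\Omega)$ guaranteed by the theorem from Duren cited above:
\begin{equation*}
f(z) = \frac{1}{2\pi \ci}\int_{\partial\Omega}\frac{f(\zeta)}{\zeta - z}\dd \zeta, \quad z \in \Omega,
\end{equation*}
split $\partial\Omega$ into the four arcs $I_1,\dots,I_4$ given by the labelling scheme of \S\ref{sec2}, and then rewrite the Cauchy kernel $1/(\zeta-z)$ on each arc as an exponential integral by means of Lemma \ref{E:limit}. Since the hypothesis $0<\Arg(z-\zeta)<\pi$ of Lemma \ref{E:limit} is tied to a specific choice of orientation, the role of the rotations $\Psi_k$ is precisely to normalise the geometry on each piece $I_k$ so that this hypothesis holds.

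On the arc $I_k$, I would apply the affine map $\Psi_k$ of Lemma \ref{L:3}. Since $\Psi_k(z) - \Psi_k(\zeta) = \ee^{-\ci\beta_k}(z - \zeta)$, Corollary \ref{C:argument} ensures that the pair $(\Psi_k(z), \Psi_k(\zeta))$ satisfies $0 < \Arg(\Psi_k(z) - \Psi_k(\zeta)) < \pi$ for every $z \in \mathrm{int}(T)$ and $\zeta \in I_k$. Applying Lemma \ref{E:limit} to this pair and multiplying through by $\ee^{-\ci\beta_k}$ gives the kernel identity
\begin{equation*}
\frac{1}{\zeta - z} = \ci\,\ee^{-\ci\beta_k}\int_0^\infty \ee^{\ci t \ee^{-\ci\beta_k}(z - \zeta)}\,\dd t.
\end{equation*}
Substituting this into Cauchy's formula, breaking up the contour as $\partial\Omega = \bigcup_{k=1}^4 I_k$, and exchanging the order of the $\zeta$- and $t$-integrations on each piece yields
\begin{equation*}
f(z) = \frac{1}{2\pi}\sum_{k=1}^4 \ee^{-\ci\beta_k}\int_0^\infty \ee^{\ci t \ee^{-\ci\beta_k} z}\left(\int_{I_k} f(\zeta)\,\ee^{-\ci t \ee^{-\ci\beta_k}\zeta}\,\dd \zeta\right)\dd t,
\end{equation*}
and the inner integral is precisely $\rho_{kk}(t)$ by Definition \ref{D:definition_spectral-jk}, which reproduces \eqref{E:repr-f} after relabelling $k\to j$.

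The main obstacle is the Fubini step. Writing $\Psi_k(z) - \Psi_k(\zeta) = X + \ci Y$, inequality \eqref{E:in2} gives $Y > 0$; in fact the proof of Lemma \ref{L:3} supplies $\Imag \Psi_k(\zeta) \leq -\mathrm{dist}(\alpha, T_k)$ for $\zeta \in I_k$ and $\Imag \Psi_k(z) > -\mathrm{dist}(\alpha, T_k)$ for $z \in \mathrm{int}(T)$, so $Y \geq c(z) > 0$ uniformly in $\zeta \in I_k$. Therefore $|\ee^{\ci t \ee^{-\ci\beta_k}(z-\zeta)}| = \ee^{-tY} \leq \ee^{-t c(z)}$ decays exponentially in $t$, and since $f \in E^1(\Omega) \subset L^1(\partial\Omega, \dd\sigma)$, the product $|f(\zeta)|\,\ee^{-t c(z)}$ is integrable on $I_k \times (0,\infty)$, legitimising the interchange by Fubini--Tonelli and completing the argument.
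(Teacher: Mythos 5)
Your proposal is correct and follows essentially the same route as the paper's own proof: Cauchy's integral formula for $f\in E^1(\Omega)$, decomposition of $\partial\Omega$ into the arcs $I_k$, desingularisation of the kernel on each arc via the rotation $\Psi_k$, Corollary \ref{C:argument} and Lemma \ref{E:limit}, and a Fubini interchange justified by the uniform lower bound on $\Imag(\Psi_k(z)-\Psi_k(\zeta))$ coming from \eqref{E:L3}--\eqref{E:L4}. No substantive differences to report.
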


\begin{proof} 
Fix $\alpha\in T$ and recall the conformal affine  maps obtained in Lemma \ref{L:3}:
\begin{equation}
\ee^{-\ci \beta_j}(w-\alpha) = \Psi_j(w),\quad  1\leq j\leq 4\, ,\quad w\in\mathbb C.
\end{equation}
Hence
\begin{equation}\label{E:star}
\ee^{-\ci \beta_j}(z-\zeta) = \Psi_j(z)-\Psi_j(\zeta), \quad \text{for any}\ z, \zeta\in\mathbb C.
\end{equation}
It follows from \eqref{E:star} and the definition of $\rho_{jj}(t)$ that
\begin{equation}\label{E:star-star-star}
\ee^{-\ci \beta_j} \ee^{\ci t \ee^{-\ci \beta_j}z} \rho_{jj}(t) = \int\limits_{I_j}\!\!f(\zeta)\, \ee^{-\ci \beta_j} \ee^{\ci t (\Psi_j(z)-\Psi_j(\zeta) )}
\dd \zeta.
\end{equation}
This can be written as
\begin{equation}\label{E:aux1}
\ee^{-\ci \beta_j}\ee^{\ci t \ee^{-\ci \beta_j}z} \rho_{jj}(t) = \int\limits_{I_j}\!\!f(\zeta)\, h_{z, \zeta}(t)\, \dd\zeta,
\end{equation}
where
\begin{equation}
h_{z, \zeta}(t) \coloneqq \ee^{-\ci \beta_j}\ee^{\displaystyle{\ci t(\Psi_j(z)-\Psi_j(\zeta))}},\quad t\in\mathbb{R}^{+}.
\end{equation}


\noindent
We claim that, for each $(z,\zeta)$, the absolute value of the function $h_{z,\zeta}$ (as a function of $t$) has finite integral on $\mathbb{R}^{+}$; more precisely, 
\begin{equation}
h_{z, \zeta}(t) \in L^1(\mathbb{R}^{+}, \dd t), \quad \text{for any}\ (z, \zeta)\in int(T)\times I_j.
\end{equation}
To see this, note that
\begin{equation}
|h_{z, \zeta}(t)| = \ee^{\displaystyle{-t\,\Imag (\Psi_j(z)-\Psi_j(\zeta))}},\quad t \in \mathbb{R}^+.
\end{equation}
But \eqref{E:in2} implies that
\begin{equation}
c_{j, \zeta} \coloneqq \Imag (\Psi_j(z)-\Psi_j(\zeta)) > 0.
\end{equation}
Hence
\begin{equation}
\int\limits_0^\infty |h_{z, \zeta}(t)|\, \dd t =
\int\limits_0^\infty \ee^{\displaystyle{-t\,c_{j, \zeta}}}\,\dd t=\lim_{t\to \infty}\frac{1}{c_{j,\zeta}}\left[1-\ee^{\displaystyle{-t\,c_{j, \zeta}}}\right] 
= \frac{1}{c_{j, \zeta}}<\infty,
\end{equation}
thus proving the claim.  Note that there exists some $\epsilon>0$ such that $c_{j,\zeta}>\epsilon$ for $\zeta\in I_j$ (this follows from \eqref{E:L3} and \eqref{E:L4}).

Next we observe that since $f\in E^1(\Omega)$  then in particular $f \in L^1(\partial\Omega, \dd \sigma)$ and hence $f \in L^1(I_j, \dd\sigma)$, $j=1,\ldots, 4$, where $\dd \sigma = |\dd \zeta|$ is the arclength measure for $\partial \Omega$.

Using Cauchy's integral formula 
\begin{equation}
f(z)=\frac{1}{2\pi \ci} \int\limits_{\partial \Omega}\frac{f(\zeta)}{\zeta-z}\dd \zeta=\frac{1}{2\pi \ci} \sum_{j=1}^4\int\limits_{I_j}\frac{f(\zeta)}{\zeta-z}\dd \zeta,\qquad \text{for any}\ z \in \Omega.
\end{equation}
But
\begin{equation}\label{E:star-star}
\frac{1}{\zeta-z} = \frac{\ee^{-i\beta_j}}{\Psi_j(\zeta)-\Psi_j(z)}, \quad \text{for any}\ z\neq\zeta,
\end{equation}
so that
\begin{equation}
f(z)=\frac{1}{2\pi \ci} \int\limits_{\partial \Omega} \frac{f(\zeta)}{\zeta-z}\,\dd \zeta=\frac{1}{2\pi \ci} \sum_{j=1}^4 \int\limits_{I_j} \frac{f(\zeta)\ee^{-\ci \beta_j}}{\Psi_j(\zeta)-\Psi_j(z)}\dd\zeta, \qquad \text{for any}\ z\in \Omega.
\end{equation}
Now, by Corollary \ref{C:argument}, we have that
\begin{equation}
0<\textrm{Arg}(\Psi_j(z)-\Psi_j(\zeta))<\pi\quad \text{whenever}\ \ (z, \zeta)\in \text{int}(T)\times I_j.
\end{equation}
Hence we may apply Lemma \ref{E:limit} and invoke the definition of $h_{z, \zeta}(t)$ to conclude that
\begin{equation}\label{E:aux2}
\frac{\ee^{-\ci \beta_j}}{\Psi_j(\zeta)-\Psi_j(z)}=\ci \int\limits_0^\infty h_{z, \zeta}(t)\, \dd t,
\end{equation}
for any $(z,\zeta)\in int(T)\times I_j$. Combining all of the above, we obtain
\begin{equation}\label{f3.23}
f(z) = \frac{1}{2\pi}\sum_{j=1}^4\int\limits_{I_j} f(\zeta) \int\limits_0^\infty h_{z, \zeta}(t) \dd t \dd\zeta.
\end{equation}
Again, note that there exists some $\epsilon>0$ such that $c_{j,\zeta}>\epsilon$ for $\zeta\in I_j$ (this follows from \eqref{E:L3} and \eqref{E:L4}). Thus the integrand in \eqref{f3.23} is bounded above uniformly in $\zeta \in I_{j}$,  hence we may apply Fubini's theorem to exchange the order of integration and obtain
\begin{equation}
f(z) = \frac{1}{2\pi}\sum_{j=1}^4 \int\limits_0^\infty\int\limits_{I_j} f(\zeta) h_{z, \zeta}(t) \dd \zeta \dd t,
\end{equation}
but by the definition of $h_{z, \zeta}(t)$ (see \eqref{E:star-star-star} and \eqref{E:aux1}), the latter is precisely the righthand side of \eqref{E:repr-f}, completing the proof of the theorem.
\end{proof} 

\noindent  
{\bf Remark 4.} 
Given $z\in \Omega$ there are many different choices of trapezoids $T$
that will work, each giving rise to a transform pair. In fact, any convex polygon inscribed in $\Omega$ will give rise to a transform pair. Each convex polygon will result in a unique partition of the boundary of $\Omega$. All such partitions (one for each choice of convex polygon) will produce comparable outcomes. 

\section{Dual Fourier Series for the disc}\label{SS:DFFSdisc}

We consider the following {\em incomplete Dirichlet boundary value problem for analytic functions in the unit disc $\mathbb{D} \coloneqq \{z \in \mathbb{C} ~| ~ |z|<1\}$} for a given $m\in\mathbb N$:
\begin{equation}\label{E:BVP-circ}
\begin{cases}\overline{\partial}f (z) = 0, \qquad |z|<1, \\ \\
\Real f(\ee^{\ci \theta})=\cos m\theta, \qquad \theta \in C_1 \coloneqq (-\pi/2, \pi/2), \\ \\
\Imag f(\ee^{\ci \theta})=-\sin m\theta, \qquad \theta \in C_2 \coloneqq  (\pi/2, 3\pi/2).
\end{cases}
\end{equation}

\noindent
(We refer to this problem as ``incomplete", because we only provide just the real part or the imaginary part of the boundary data, on each piece of the boundary.)  
This problem is uniquely solvable; it was originally solved by Shepherd \cite{Shepherd1937} via a sequence of ingenious manipulations of integral representations of the Fourier series and was revisited by Crowdy \cite{Crowdy:2015}.

Here we proceed as follows to obtain the solution. In the first step, we use one of the global relations \eqref{E:global-relation}  to extend the given (incomplete) boundary data to a function $f(\zeta)$ defined on the full boundary $\partial \mathbb{D}$.
In the second step, we use our transform pair to numerically compute at a given point $z \in \mathbb{D}$ the (unique) solution of the completion of \eqref{E:BVP-circ} to a Dirichlet problem on $\partial\mathbb{D}$. 

We point out that the Dirichet problem above will have a unique solution in $E^\infty(\mathbb{D})$. Such a solution cannot be expressed in closed form, but can be computed numerically by the two steps mentioned previously. Note that the boundary data above may have many other solutions that to not belong to $E^p(\mathbb{D})$ for any $p\geq 1$.
For instance, it is immediate to see that
\begin{equation}\label{E:obvious}
f(\ee^{\ci \theta}) \coloneqq \ee^{-\ci\,m\theta},\quad \theta\in (-\pi/2, 3\pi/2],
\end{equation}
is an extension of the given data to the full boundary, in fact: 
\begin{equation}\label{E:obvious-circle}
f(\zeta) = \overline{\zeta}^{\,m},\quad \zeta\in\partial\mathbb D.
\end{equation}
However this choice of $f(\zeta)$ has the unique harmonic extension 
\begin{equation}
f(z) \coloneqq \overline{z}^{\,m}, \quad z\in \overline{\mathbb{D}},
\end{equation}
which does not solve \eqref{E:BVP-circ} because it is not holomorphic. The main thrust of the global relations \eqref{E:global-relation} is that any of them drives the numerical construction of the unique extension
to $\partial \mathbb D$ belonging to $E^\infty(\mathbb{D})$ that solves \eqref{E:BVP-circ}.

The trapezoid in the transform pair derived above may be taken to be a rectangle with sides parallel to the coordinate axis. Thus for any $z\in\mathbb{D}$, we may find a rectangle $R$ as described in the previous sentence with $z\in R$ and use the following {\em rectangle-like} transform pair:
\begin{equation}\label{E:TP}
\begin{cases}
\displaystyle{f(z)=\frac{1}{2\pi }\sum\limits_{n=1}^4 \int\limits_{\mathcal L} c_n \ee^{\ci c_ntz} \rho_{nn}(t) \dd t},\quad |z|<1, \\ \\
\displaystyle{\rho_{jn}(t)=\int\limits_{I_n(z)}\!\!\!f(\zeta)\ee^{-\ci c_jt\zeta}\dd\zeta,\quad n, j=1, \ldots, 4},
\end{cases}
\end{equation}
where
\begin{itemize}
\item $c_j \coloneqq \ee^{\ci \beta_j}$, $j=1,\ldots 4$ where  $\beta_j=\pi(j-1)/2$, $n=1,2,3,4$.
\item $\mathcal L$ is the {\em fundamental contour}. $\mathcal L \coloneqq (0, +\infty)$ (hence $t\in\mathbb R^+$)
\item $\{I_n(z), \ n=1,\ldots, 4\}$ is a partition of the unit circle into four disjoint ``coordinate arcs'' (each subtended by two points that determine a horizontal or vertical straight line segment) determined by the evaluation point $z$ in $\mathbb{D}$. 
\end{itemize}

On $C_1$, we write, as in \cite{Crowdy:2015},
\begin{equation}\label{F:one}
f(\ee^{\ci \theta})=\cos m\theta + \ci \Big(a_0+\sum_{n\geq 1}\Big[a_n \ee^{2\ci n\theta}+\overline{a_n} \ee^{-2\ci n\theta}\Big]\Big), \quad \theta\in (-\pi/2, \pi/2),
\end{equation}
where the coefficients $a_0 \in \mathbb{R}$ and $\{a_n\in\mathbb C|n=1,2,\dots \}$ are to be determined. Similarly, on $C_2$, we write
\begin{equation}\label{F:two}
f(\ee^{i\theta})= b_0+\sum_{n\geq 1}\Big[b_n \ee^{2 \ci n\theta}+\overline{b_n} \ee^{-2\ci n\theta}\Big]-\ci \sin m\theta\, ,\quad \theta \in (\pi/2, 3\pi/2),
\end{equation}
where coefficients $b_0\in \mathbb{R}$ and $\{b_n\in\mathbb C|n=1,2,\dots \}$ are to be determined.  Recall the global relation given in \eqref{E:global-relation}: 
\begin{equation}\label{E:global}
\sum_{n=1}^4\rho_{1n}(t)=\sum_{n=1}^4\,\int\limits_{I_n(z)}\!\!\! f(\zeta)\ee^{-\ci t\zeta}\dd \zeta=\int\limits_{\partial \mathbb D}f(\zeta)\ee^{-\ci t\zeta}\dd\zeta =0, \quad t\in \mathbb{C}.
\end{equation}


On substitution of \eqref{F:one} and \eqref{F:two} into \eqref{E:global}, we obtain a linear system for the unknown coefficients given by
\begin{equation}\label{S:one} 
\begin{split}
&a_0\A(0,t)+\sum_{n\geq 1}\Big(a_n\A(2n,t)+\overline{a_n} \A(-2n,t)\Big) \\
&~~~~~~+b_0\B(0,t)+\sum_{n\geq 1}\Big(b_n \B(2n,t)+\overline{b_n} \B(-2n,t)\Big)=r(t), \quad t \in \mathbb{C}, \\
&a_0\overline{\A(0,t)}+\sum_{n\geq 1}\Big(\overline{a_n} \overline{\A(2n,t)}+ a_n\overline{\A(-2n,t)}\Big) \\
&~~~~~~+b_0\overline{\B(0,t)} +\sum_{n\geq 1}\Big(\overline{b_n} \overline{\B(2n,t)}+ b_n\overline{\B(-2n,t)}\Big)=\overline{r(t)}, \quad t \in \mathbb{C},
\end{split}
\end{equation}
where
\begin{equation}\label{ABdisc}
\begin{split}
\A(n,t)&=- \int_{-\pi/2}^{\pi/2}\ee^{-\ci t \ee^{\ci\theta}}\ee^{\ci (n+1) \theta}\dd\theta, \\ 
\B(n,t)&=\ci \int_{\pi/2}^{3\pi/2}\ee^{-\ci t\ee^{\ci\theta}}\ee^{\ci (n+1) \theta} \dd\theta,
\end{split}
\end{equation}
and the function $r(t)$ is defined by
\begin{equation}\label{rdisc}
r(t)=-\int_{\pi/2}^{3\pi/2} \sin(m\theta)\ee^{-\ci t\ee^{\ci \theta}} \ee^{\ci \theta}\dd\theta - \ci \int_{-\pi/2}^{\pi/2}\cos(m\theta)\ee^{-\ci t \ee^{\ci\theta}} \ee^{\ci \theta}\dd \theta.
\end{equation}
The equations above hold for all $t \in \mathbb{C}$. To proceed, the sums  \eqref{F:one} and  \eqref{F:two} (for $f(e^{i\theta})$) are truncated to include only terms up to $n=N$, and we formulate a linear system for the unknown coefficients $\{a_n, b_n | n=0,\dots,N \}$. The linear system comprises of conditions \eqref{S:one} evaluated at points $t \in \mathbb{C}$ which are used to form an overdetermined linear system. This is then solved using least squares. Following an approach similar to Colbrook, {\em et al.} \cite{ColbrookFlyerFornberg:2018}, points $t \in \mathbb{C}$ are chosen to be the origin and the concentric circles
\begin{equation}
\{t \in \mathbb{C} | t=-j/f'(\ee^{\ci \theta}), \theta \in [0,2\pi], j=1,\dots,M_r\},
\end{equation}
for some choice for the parameter $M_r$.

The solution of the linear system \eqref{S:one} shows that the coefficients $\{a_n, b_n | n=0,\dots,N \}$ decay quickly and, therefore, we choose the truncation parameter to be $N=16$. Once the coefficients $\{a_n, b_n | n=0,\dots,N \}$ are found, the spectral functions $\rho_{jk}(t)$ can be computed. The function$f(z)$ can be then computed via the transform pair \eqref{E:TP}. We have verified our numerical results with those obtained by Shepherd \cite{Shepherd1937} and Crowdy \cite{Crowdy:2015}. Figure \ref{fig: comparison1} shows the real and imaginary parts of $f(\zeta(\theta))$, $ \theta \in [-\pi/2, 3\pi/2]$ along the boundary of the unit disc computed using our new formulation, as well as Shepherd's \cite{Shepherd1937} and Crowdy's \cite{Crowdy:2015} solutions. We observe that there appear to be discontinuities in the real part of $f$ for $\theta=\pi/2$ and $\theta=3\pi/2$ (also corresponding to $\theta=-\pi/2$), values at which the boundary conditions change type. Discontinuities as such are not surprising given the solution is in $E^1$.

\begin{figure}[H]
\centering
\includegraphics[scale=0.36]{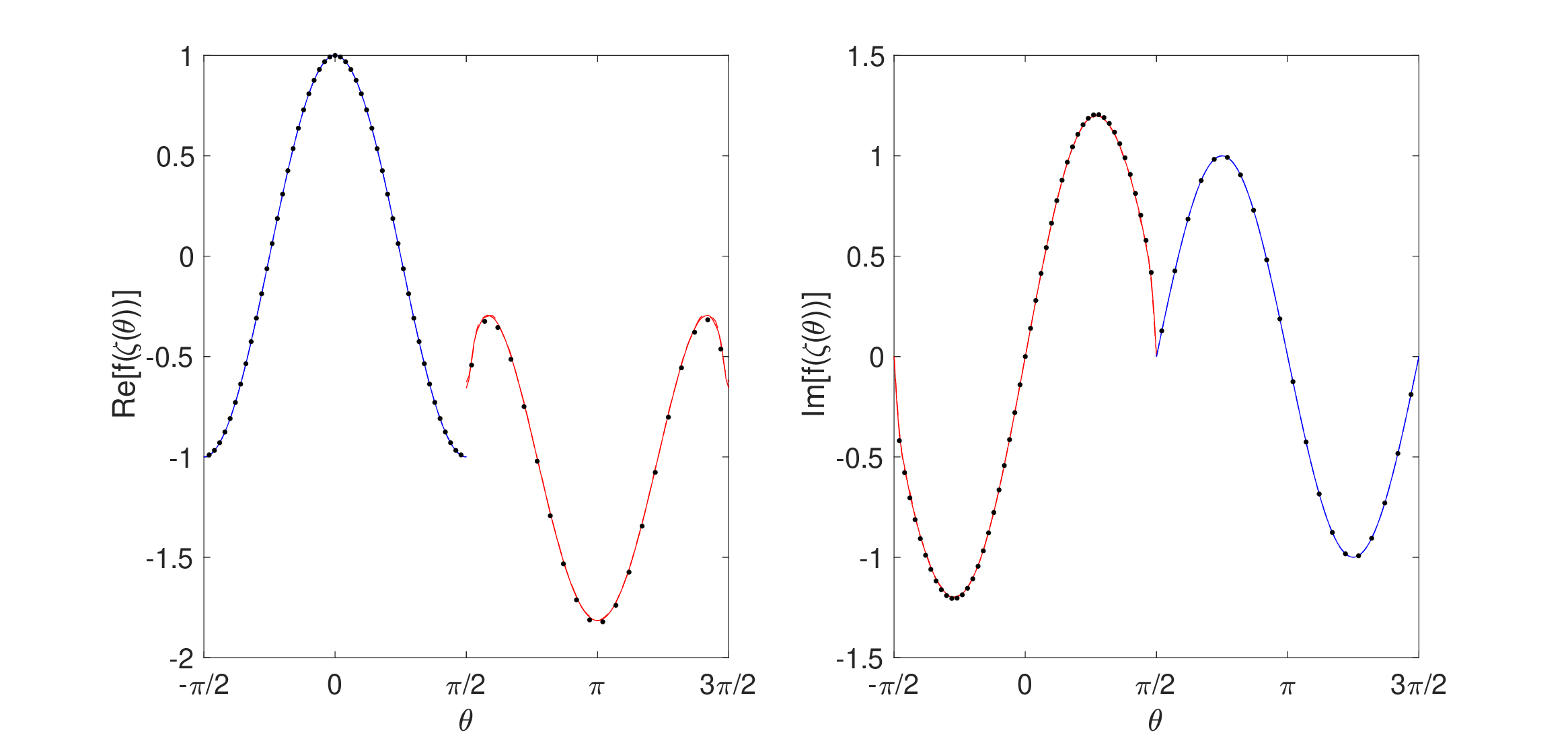}
\caption{Real (left) and imaginary (right) parts of $f(\zeta(\theta))$, $ \theta \in [-\pi/2, 3\pi/2]$ along the boundary of the unit disc, for $m=2$: new formulation for $N=16$ (solid lines), Crowdy's \cite{Crowdy:2015} solution for the same truncation parameter (dashed lines) and Shepherd's \cite{Shepherd1937} solution (dots). The solutions via the new formulation and Crowdy's \cite{Crowdy:2015} solution are indistinguishable.}
\label{fig: comparison1}
\end{figure}

\section{Dual Fourier Series for the Ellipse}

In this section, we present a generalization of the mixed boundary value problem considered by Shepherd \cite{Shepherd1937} on the disc to a problem posed on an elliptical domain $D$, namely   
\begin{equation}\label{ellipse eqn}
D \coloneqq \left\{z \in \mathbb C, x=\text{Re}[z], y=\text{Im}[z] \quad \Big| \quad \frac{x^2}{a^2}+\frac{y^2}{b^2}<1\right\},
\end{equation}
whose boundary is
\begin{equation}
\partial D=\left\{z \in \mathbb C, x=\text{Re}[z], y=\text{Im}[z] \quad \Big| \quad \frac{x^2}{a^2}+\frac{y^2}{b^2}=1\right\}.
\end{equation}
We emphasize that the new methodology can be applied to solve boundary value problems in any convex domain; here we have chosen to focus on the ellipse. We consider the following incomplete boundary value problem for analytic functions
\begin{equation}\label{E:BVP-ell-good}
\begin{cases}
\overline{\partial}f (z) = 0,\qquad z \in D, \\ \\ 
\Real f(\zeta)= \Real\hspace{1mm} \overline{\zeta}^m, \qquad \zeta\in \partial D \text{ and } \Real \zeta>0, \\ \\
\Imag f(\zeta)=\Imag \hspace{1mm} \overline{\zeta}^m , \qquad \zeta\in \partial D \text{ and }  \Real \zeta<0,
\end{cases}
\end{equation}
for a given $m\in\mathbb N$.

We parametrise the ellipse using polar coordinates $(\rho,\theta)$, with $0\leq \theta\leq 2\pi$ and
\begin{equation} 
\rho(\theta)=\frac{ab}{\sqrt{(b\cos\theta)^2+(a\sin\theta)^2}}
\end{equation}
and write       
\begin{equation}\label{parametrization_polar}
\zeta(\theta) \coloneqq \rho(\theta)\ee^{\ci \theta} \in D.
\end{equation}


\noindent
The boundary value problem \eqref{E:BVP-ell-good} can be written in terms of polar coordinates as
\begin{equation}\label{E:BVP-ell-good_polar}
\begin{cases}
\overline{\partial}f (z) = 0,\qquad z \in D, \\ \\
\Real f= [\rho(\theta)]^m \cos m\theta, \qquad \theta\in C_1\coloneqq (-\pi/2, \pi/2), \\ \\
\Imag f=- [\rho(\theta)]^m \sin m\theta, \qquad \theta\in C_2 \coloneqq (\pi/2, 3\pi/2).
\end{cases}
\end{equation}

On $C_1$, we write   
\begin{equation}\label{E:apriori-1}
f(\theta,\rho(\theta))=[\rho(\theta)]^m \cos m\theta+\ci\Big(a_0+\sum_{n\geq 1}\Big[a_n\ee^{2\ci n\theta}+\overline{a_n} \ee^{-2 \ci n\theta}\Big]\Big).  
\end{equation}
where coefficients $\{a_n \in \mathbb C | n=0,1,2,\dots \}$ are to be determined.

On $C_2$, we write
\begin{equation}\label{E:apriori-2}
f(\theta,\rho(\theta))= \Big( b_0+\sum_{n\geq 1}\Big[b_n\ee^{2\ci n\theta}+\overline{b_n} \ee^{-2\ci n\theta}\Big] \Big) -\ci [\rho(\theta)]^m \sin m\theta.
\end{equation}
where coefficients $\{b_n \in \mathbb C | n=0,1,2,\dots \}$ are to be determined.

We have the following global relation:
\begin{equation}\label{E:global_ellipse}
\sum_{n=1}^4\rho_{1n}(t)=\sum_{n=1}^4\,\int\limits_{I_n(z)}\!\!\! f(\zeta)\ee^{-\ci t\zeta}\dd\zeta=\int_{\partial D}f(\zeta)\ee^{-\ci t\zeta}\dd\zeta =0, \quad t \in \mathbb{C},
\end{equation}
where the last identity is due to Cauchy theorem applied to $F(\zeta) \coloneqq f(\zeta)\ee^{-\ci t\zeta}$ (which is analytic in $D$ for each fixed $t \in \mathbb{C}$. (Note that we need to strengthen our assumption on $f$ to ensure applicability of Cauchy theorem, e.g. $f\in H^1(D)$ as in the proceeding sections.)

On substitution of \eqref{E:apriori-1} and \eqref{E:apriori-2} into the global relation \eqref{E:global_ellipse} and evaluation of \eqref{E:global_ellipse} at certain values of $t \in \mathbb{C}$, we obtain a linear system for the unknown coefficients $\{a_n, b_n | n=0,\dots,N \}$ of $f$. The linear system is given by \eqref{S:one}, but now $\A(n,t)$, $\B(n,t)$ and $r(t)$ are defined by
\begin{equation}\label{ABellipse}
\begin{split}
\A(n,t)&= \ci \int_{-\pi/2}^{\pi/2}\ee^{-\ci t\zeta(\theta)}\ee^{\ci n \theta} \zeta'(\theta) \dd\theta, \\ \B(n,t)&= \int_{\pi/2}^{3\pi/2}\ee^{-\ci t\zeta(\theta)}\ee^{\ci n \theta} \zeta'(\theta) \dd\theta,
\end{split}
\end{equation}
and
\begin{equation}\label{rellipse}
r(t)=-\int_{-\pi/2}^{\pi/2} [\rho(\theta)]^m \cos (m\theta) \ee^{-\ci t \zeta(\theta)}\zeta'(\theta)\dd\theta
+ \ci \int_{\pi/2}^{3\pi/2} [\rho(\theta)]^m \sin (m\theta)\ee^{-\ci t \zeta(\theta)}\zeta'(\theta)\dd\theta.
\end{equation}
Note that if $a=b=1$, i.e. we work with the unit circle, then expressions \eqref{ABellipse}--\eqref{rellipse} become identical to \eqref{ABdisc}--\eqref{rdisc}.


The equations \eqref{S:one} hold for any $t \in \mathbb{C}$. To proceed, the sums  \eqref{E:apriori-1} and  \eqref{E:apriori-2} (for $f(\theta,\rho(\theta)$) are truncated to include only terms up to $n=N$ and we formulate a linear system for the unknown coefficients $\{a_n, b_n | n=0,\dots,N \}$. The linear system comprises of conditions \eqref{S:one} evaluated at points $t \in \mathbb{C}$ which are used to form an overdetermined linear system.This is then solved using least squares. Following a similar approach to Colbrook {\em et al.} \cite{ColbrookFlyerFornberg:2018}, points $t \in \mathbb{C}$ are chosen to be the origin and the concentric ellipses
\begin{equation}
\{t \in \mathbb{C} | t=-j/f'(\theta,\rho(\theta)), \theta \in [0,2\pi], j=1,\dots,M_r\},
\label{GRellipsepoints}
\end{equation}
for some choice for the parameter $M_r$.

\begin{figure*}[t!]
    \centering
    \begin{subfigure}[t]{0.5\textwidth}
        \centering
        \[\includegraphics[scale=0.24] {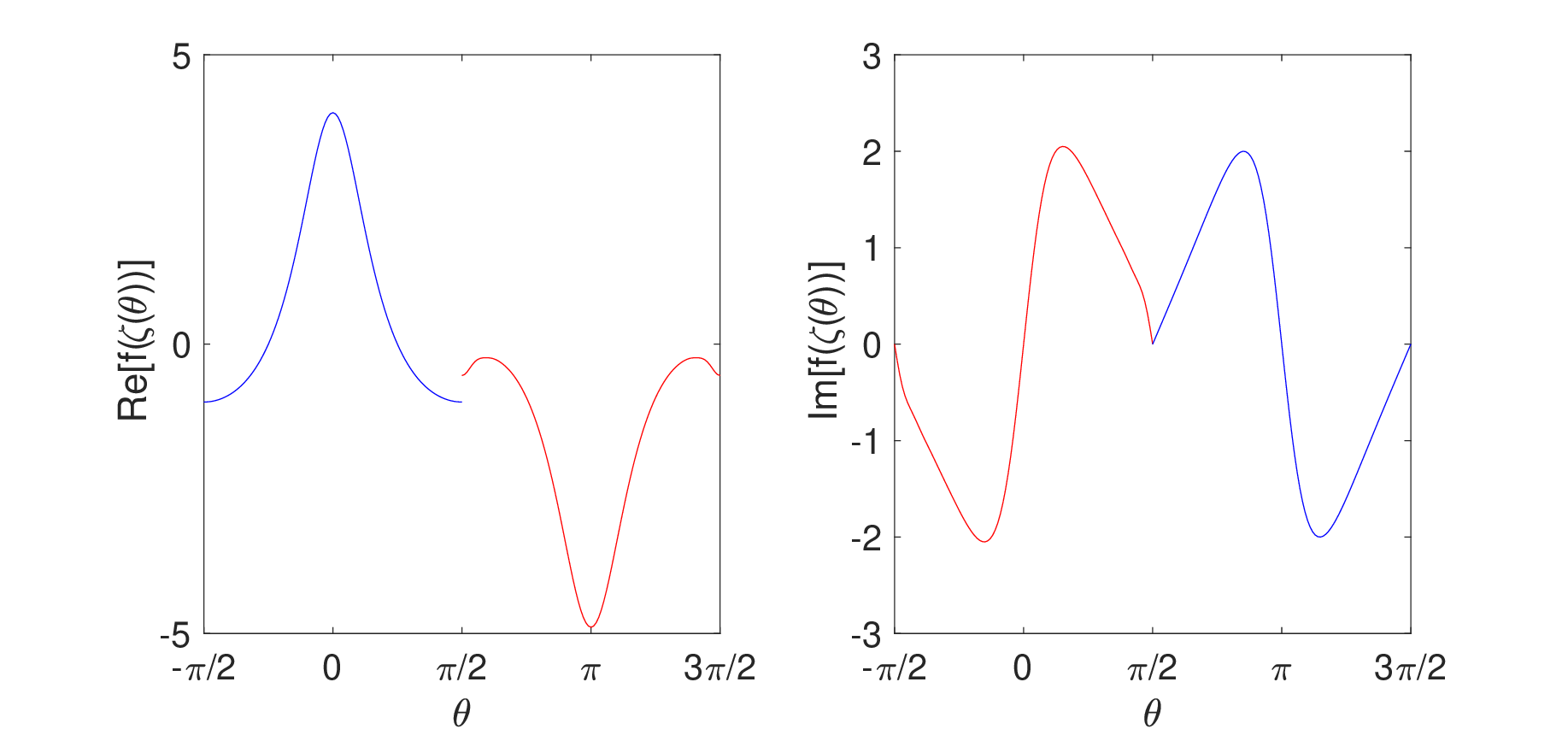} \]
\caption{Parameters: $a=2$, $b=1$.}
\label{Fig:3a}
    \end{subfigure}%
~    
    \begin{subfigure}[t]{0.5\textwidth}
        \centering
  \[\includegraphics[scale=0.24] {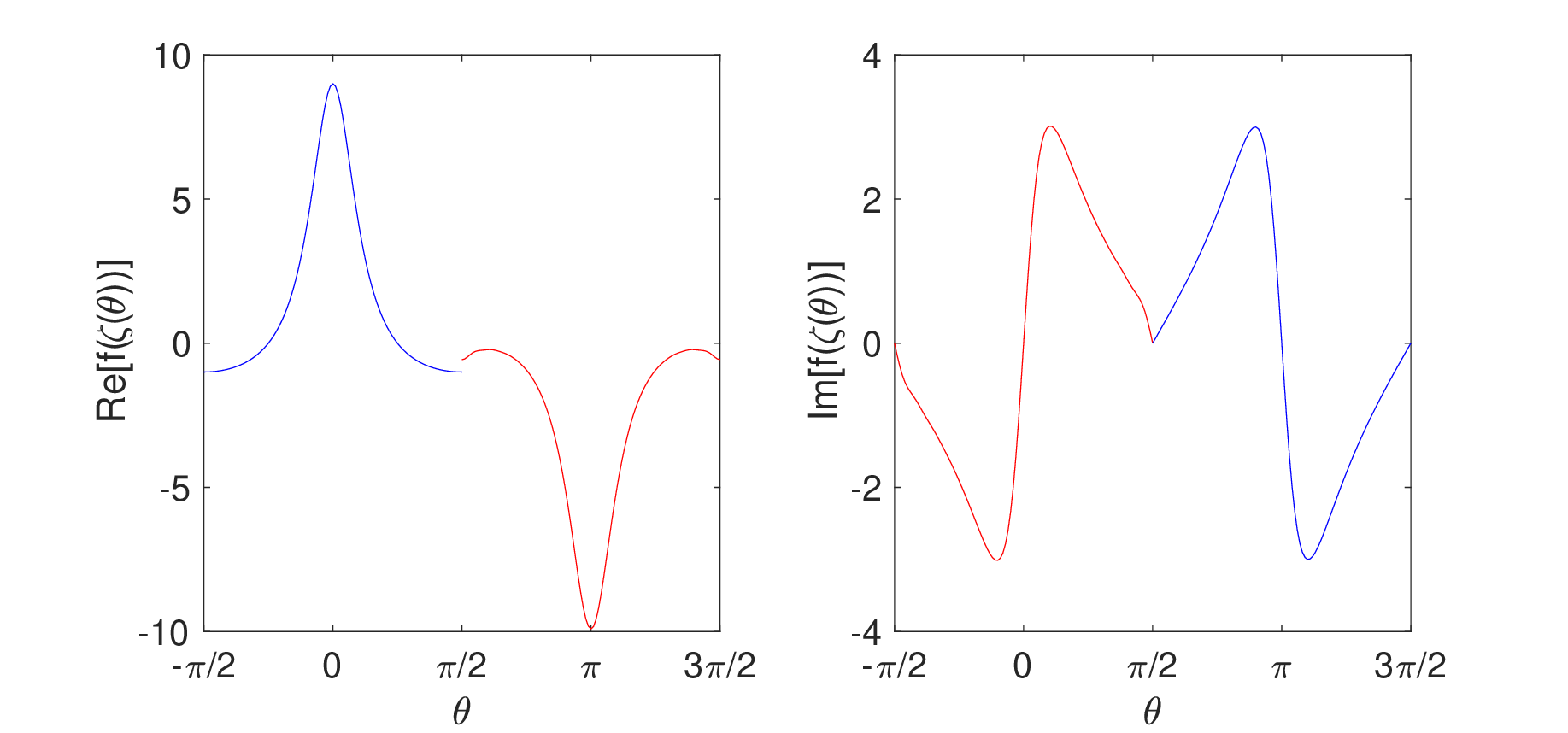} \]
  \caption{Parameters: $a=3$, $b=1$.}
  \label{Fig:3b}
    \end{subfigure}
    \caption{Real and imaginary parts of $f(\zeta(\theta))$, $ \theta \in [-\pi/2, 3\pi/2]$ along the boundary of the ellipse, for $m=2$.}
    \label{fig: ellipse examples}
\end{figure*}

Similarly to the mixed boundary value problem posed on the unit disc presented in the previous section, we found that the coefficients $\{a_n, b_n | n=0,\dots,N \}$ decay quickly and, therefore, we choose the truncation parameter to be $N=16$. Once the coefficients $\{a_n, b_n | n=0,\dots,N \}$ are found, the spectral functions $\rho_{jk}(t)$ and $f(z)$ can be computed via the transform pair \eqref{E:TP}. Figure \ref{fig: ellipse examples} shows the real and imaginary parts of $f(\zeta(\theta))$, $ \theta \in [-\pi/2, 3\pi/2]$ along the boundary of the ellipse, for $m=2$ and different parameter choices for $a$ and $b$. Similarly to the mixed boundary value problem on the unit disc presented in the previous section, we observe that there appear to be discontinuities in the real part of $f$ for $\theta=\pi/2$ and $\theta=3\pi/2$ (also corresponding to $\theta=-\pi/2$), values at which the boundary conditions change type. Discontinuities as such are not surprising given the solution is in $E^1$.

\section{Application in fluid dynamics: A point vortex in the interior of an elliptical boundary} 


\noindent
In this section, we present an application of the new formulation to a problem in fluid dynamics, in particular within the framework of a two-dimensional, inviscid, incompressible and irrotational (except of point vortices) steady flow. We consider a point vortex (namely, a vortex with infinite vorticity concentrated at a point) in the interior of a boundary with an elliptical shape and the aim is to find the resulting fluid flow satisfying the imposed impermeability boundary condition.

\subsection{Problem formulation}

We consider a point vortex at point $\zeta_0$ in the interior of the ellipse \eqref{ellipse eqn}. A schematic of the configuration is shown in Figure \ref{fig: point vortex}. To begin, we introduce a complex potential function $h(z)$ and write
\begin{equation}
h(\zeta)=f_s(\zeta)+f(\zeta), \label{expansionellipse}
\end{equation}
where
\begin{equation}
f_s(\zeta) = \frac{\Gamma}{2\pi \ci} \log(\zeta-\zeta_0),
\end{equation}
where $\Gamma \in \mathbb{R}$ is the circulation. The function $f(\zeta)$ is analytic in the ellipse and will be found using the transform method \eqref{E:repr-f}.

\begin{figure}[H]
\centering
\includegraphics[scale=0.8]{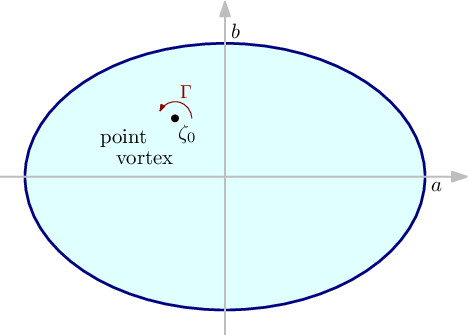}
\caption{Schematic of the configuration: A point vortex at $\zeta_0$ in the interior of an elliptical boundary.}
\label{fig: point vortex}
\end{figure}

\subsection{Solution scheme}

\subsubsection{Boundary condition}
We impose an impermeability condition on the boundary of the ellipse:
\begin{equation}\label{impermeability}
{\bf u} \cdot {\bf n}=0,
\end{equation}
where ${\bf u}=(u,v)$ is the two-dimensional velocity field and ${\bf n}$ denotes the direction outward normal to the ellipse. Condition \eqref{impermeability} can be expressed in terms of the complex potential as
\begin{equation}\label{BCvortex}
\text{Im}[h(\zeta)]=0.
\end{equation}
Substitution of \eqref{expansionellipse} into \eqref{BCvortex} gives
\begin{equation}\label{BCvortex2}
\text{Im}[f(\zeta)] = - \text{Im}[f_s(\zeta)].
\end{equation}


\subsubsection{Function representation}

We represent $f(\zeta)$ on the boundary of the ellipse using a Fourier expansion:
\begin{equation}\label{function rep1}
f(\zeta(\theta)) = \Big(\sum_{n=0}^{\infty} a_n \ee^{\ci n \theta} + \sum_{n=0}^{\infty} \overline{a_n} \ee^{-\ci n \theta} \Big) - \ci ~\text{Im}[f_s(\zeta)], \quad \text{for } \theta \in [0,2\pi],
\end{equation}
where coefficients $\{a_n \in \mathbb C | n=1,2,\dots \}$ are to be determined. The parametrization of the ellipse is given by
\begin{equation}
\zeta(\theta)=a \cos\theta +\ci b \sin\theta, \quad \theta \in [0,2\pi].
\end{equation}

Note that, since the imposed boundary condition \eqref{impermeability} is the same along the entire elliptical boundary (in contrast to the previous problems where different conditions where imposed on different sections of the boundary), we have used a single representation \eqref{function rep1} for $f(\zeta)$ to proceed instead of different representations on different sections of the boundary.

\subsubsection{Spectral analysis}

On substitution of \eqref{function rep1} into the global relation \eqref{E:global_ellipse}, we obtain (after some algebra and rearrangement) a linear system for the unknown coefficients $\{a_n | n=1,\dots,N\}$. The infinite sums are truncated to include terms up to $n=N$. The linear system is given by

\begin{equation}
\sum_{n=0}^{N} \Big(a_n P(n,t) + \overline{a_n} P(-n,t)\Big)=R(t), \quad \text{for } t \in \mathbb{C},
\label{linearsystempointsingularity}
\end{equation}
where
\begin{equation}
P(n,t)=\int_{0}^{2\pi} {\ee^{\ci n\theta}\ee^{-\ci t \zeta(\theta)} \zeta'(\theta) \dd \theta}
\end{equation}
and
\begin{equation}
R(t) = \ci \int_{0}^{2\pi} {\text{Im}[f_s(\zeta(\theta))] \ee^{-\ci t \zeta(\theta)} \zeta'(\theta) \dd \theta}.
\end{equation}
The linear system comprises of \eqref{linearsystempointsingularity} and its conjugate evaluated at points $t \in \mathbb{C}$ which are used to form an overdetermined linear system. The choice of points $t \in \mathbb{C}$ is the same as in the previous section and given by \eqref{GRellipsepoints}. We found that the coefficients $\{a_n | n=1,\dots,N \}$ decay quickly and, therefore, we choose the truncation parameter to be $N=16$. Once the coefficients are found, the spectral functions and $f(z)$ can be computed via the transform pair \eqref{E:TP}.

Our results were checked against an exact solution found using conformal mapping techniques; introduce the conformal mapping $\Phi$ from the ellipse $D$ in the $\zeta$-plane to the unit disc $\mathbb{D}$ in the $w$-plane to be given by (Schwarz \cite{Schwarz:1869}):
\begin{equation}
\begin{split}
w=\Phi(\zeta) &= \sqrt{k} ~\text{sn} \left(\frac{2K}{\pi} \sin^{-1}(\zeta),m \right),
\end{split}
\end{equation}
where $\text{sn}(\cdot, \cdot)$ is the Jacobi sn function and definition of parameters $k,K$ and $m$ is given in \cite{Schwarz:1869,Szego:1950}. The complex potential function for a point vortex in the ellipse is given by
\begin{equation}\label{exactpotential}
H(w)=\frac{\Gamma}{2\pi \ci} \log\left(\frac{w-w_0}{w-1/\overline{w_0}}\right), \qquad \text{with} \quad w_0=\Phi(\zeta_0).
\end{equation}
We have compared our results to \eqref{exactpotential} for interior points and the error at interior points was of the order $\mathcal{O}(10^{-4})$. To illustrate our results, Figure \ref{fig: contour} shows the streamline pattern for a point vortex of strength $\Gamma=1$ at point $\zeta_0$ in the interior of an ellipse with parameters $a$ and $b$.

\begin{figure}[H]
\hspace*{-1.7cm}
\includegraphics[scale=0.34]{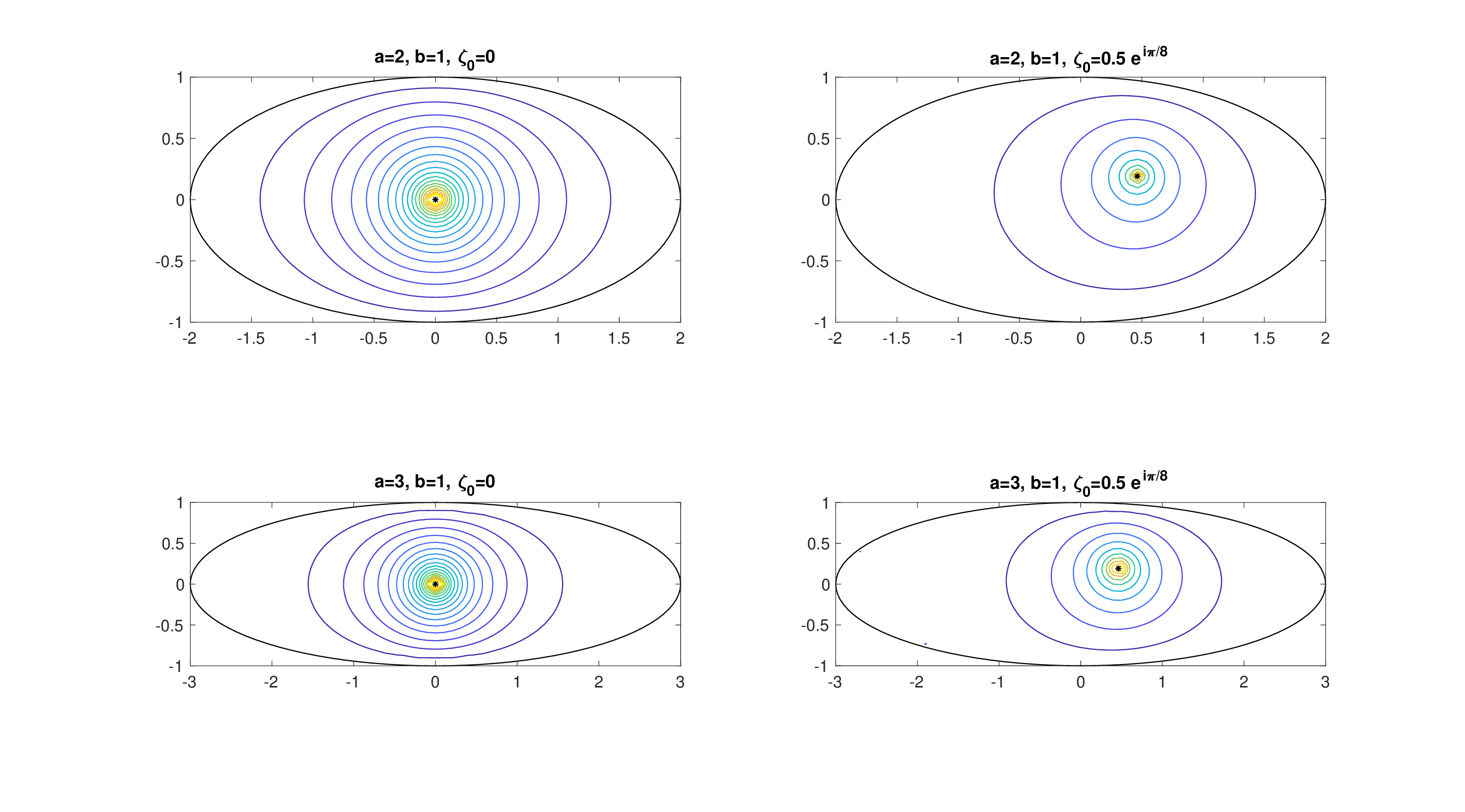}
\vspace{-1cm}
\caption{Streamlines for a point vortex of strength $\Gamma=1$ at point $\zeta_0$ in the interior of an ellipse with parameters $a$ and $b$.}
\label{fig: contour}
\end{figure}

\section{The complex Helmholtz equation}

In this section, we discuss how to obtain a transform pair for the complex Helmholtz equation in bounded convex planar domains.

Hauge \& Crowdy \cite{HaugeCrowdy:2021} presented a transform method for the complex Helmholtz equation in polygonal domains using the theory of Bessels functions and Greens second identity.
Given the geometric results of Lemma {\ref{L:3}}, here we develop a transform pair for the complex Helmholtz equation given by 
\begin{equation}
\Delta \phi -4\sigma \phi=0,
\end{equation}
where $\Delta=\nabla^2=\partial^2/\partial x^2+\partial^2/\partial y^2$ is the Laplacian operator and $0<\Arg(\sigma)<\pi$ that mirrors the transform pair for the $\overline{\partial}$ equation. Two results were pivotal in developing the transform pair for the $\overline{\partial}$ equation: the Cauchy integral formula and the desingularized Cauchy kernel. While a new integral representation for functions that satisfy the complex Helmholtz equation is required, Corollary \ref{C:argument} is still paramount in the desingularization of the new integral kernel.
           
\subsection{An integral representation for solutions of the Helmholtz equation}

The classical Green's second identity:
\begin{equation}
\iint_D \Big[\phi \Delta \psi - \psi\Delta\phi \Big] \dd A = \int_{\partial D} \phi\frac{\partial \psi}{\partial \bold{n}}-\psi\frac{\partial\psi}{\partial \bold{n}} \dd s,
\end{equation}
where  $D$ is a bounded domain with piecewise $C^2$ boundary, $\dd A$ is the Lebesgue measure and $\dd s$ is the surface area measure, holds in $\mathbb{R}^n$ for twice continuously differentiable functions $\phi$ and $\psi$ that extend continuously to the boundary of $D$. If one restricts to $\mathbb{C}$ and uses the the identities
\begin{equation}
\frac{\partial^2 }{\partial z\partial \overline z}=4\Delta, \qquad \dd=\partial +\overline \partial, \qquad \dd A=\dd x \dd y=-2\ci \dd z \dd \overline z
\end{equation}
and Stokes' theorem, we can write Green's second identity in complex notation
\begin{equation}
\iint_D\Big[ \phi \Delta \psi-\psi\Delta\phi\Big] \dd A =2 \ci \int_{\partial D} \frac{\partial \phi}{\partial z}\psi \dd z+\phi\frac{\partial\psi}{\partial \overline z} \dd \overline z.
\end{equation}           
The (free space) Green's Function $\mathcal G(z,z_0)$ for the complex Helmholtz equation is defined as
\begin{equation}
\Delta \mathcal G(z,z_0)-4\sigma \mathcal G(z,z_0)=\delta(z-z_0).
\end{equation}           
If we let $G=\mathcal G$ and let $\phi$ be the solution to the complex Helmholtz equation (i.e. $\Delta \phi=4\sigma \phi$), then we get
\begin{equation}
\begin{split}
\iint_D \Big[ \phi(z) \Delta G(\zeta,z)-G(\zeta,z)\Delta\phi\Big] \dd A&=\iint_D \Big[ \phi(z) \Delta G(\zeta,z)-G(\zeta,z)4\sigma \phi(\zeta) \Big] \dd A \\
&=\iint_D \phi(\zeta)\Big[\Delta G(\zeta,z)-G(\zeta,z)4\sigma\Big] \dd A \\
&=\iint_D \phi(\zeta)\delta(\zeta-z) \dd A \\
&=\phi(z).
\end{split}
\end{equation}
Hence we obtain
\begin{equation}
\phi(z)=2 \ci \int_{\partial D}\Big[\phi(\zeta)\frac{\partial G(\zeta,z)}{\partial \overline \zeta} \dd \overline \zeta+G(\zeta,z)\frac{\partial \phi(\zeta)}{\partial \zeta} \dd \zeta \Big]. \label{(7.7)}
\end{equation} 
Note that the formula above gives us a way to recover interior values of $\phi$ by only knowing the boundary values of $\phi$. Formula \eqref{(7.7)} plays for the complex Helmholtz equation the analogous role as the Cauchy integral formula for the $\overline{\partial}$ equation. In order to compute this integral, one needs an explicit formula for $\mathcal G$.

\subsection{The desingularization of the Green's function}

The Green's function $\mathcal G$ for the complex Helmholtz equation can be expressed in terms of the order-zero modified Bessel function $K_0$. The function $K_0$ is axissymmetric, singular as $z\rightarrow z_0$ and decays as $z\rightarrow \infty$. For $\sigma$ as above, we have
\begin{equation}
\mathcal G(z,z_0)=-\frac{1}{2\pi}K_0(2\sqrt \sigma |z-z_0|).
\end{equation}

Hauge \& Crowdy \cite{HaugeCrowdy:2021} gave an integral representation for $\mathcal G$.
\begin{equation}
K_0(z)=\frac{1}{2}\int_{L_w} \ee^{-(z/2)(\alpha+1/\alpha)} \dd \alpha/\alpha, \qquad -\frac{\pi}{2}<\arg(z)-\theta<\frac{\pi}{2},
\end{equation}
where $\theta$ is the geometrical angle corresponding to the contour $L_W$ in the complex plane joining the two essential singularities of the integrand at 0 and $\infty$. The contour must be chosen so that the integrand decays along the contour as it approaches 0 and $\infty$. If we let $\theta_0$ be the angle that the contour makes with the at the origin, we need $-\pi/2<\arg(z)-\theta_0<\pi/2$. If the contour approaches $\infty$ at an angle of $\theta_\infty$, then we need $-\frac{\pi}{2}<\arg(z)+\theta_\infty<\frac{\pi}{2}$. If we choose $\theta_0=\theta_\infty$, and set $\theta=\theta_0$, the two inequalities above produce the same $\pi$-length range of possible arguments for $\arg u$. Letting $L_\alpha$ be a contour that approaches $0$ and $\infty$ at an angle of $\theta$, we have that 
\begin{equation}
\mathcal G(z,z_0)=-\frac{1}{4\pi}\int_{L_\alpha} \ee^{-\sqrt\sigma |z-z_0|(\alpha+1/\alpha)}\frac{\dd \alpha}{\alpha}.
\end{equation}
Using the change of variables 
\begin{equation}
- \ci t \ee^{\ci \phi}=\sqrt\sigma \alpha, \qquad \phi=\arg(z-\zeta),
\end{equation}
we define
\begin{equation}
G(\zeta,z) \coloneqq \mathcal G(\zeta,z)=-\frac{1}{4\pi}\int_{L^{(\sigma)}}\ee^{\ci t(z-\zeta)- \ci \sigma\overline{(z-\zeta)}/t}\frac{\dd t}{t},
\end{equation}
where $L^{(\sigma)}$ is the contour $L_\alpha$ after the change of variables. Suppose now we pick a contour $L^{(\sigma)}_0$ so that our integral representation is valid for $0<\arg(z-\zeta)<\pi$. The exact contour does not matter, as long as we have $\theta_0=\arg(\sigma), \theta_\infty=0.$ So, we have
 \begin{equation}
G(\zeta,z)=-\frac{1}{4\pi}\int_{L^{(\sigma)}_0} \ee^{\ci t(z-\zeta)-\ci \sigma\overline{(z-\zeta)}/t}\frac{\dd t}{t}.
 \end{equation}
Now suppose we wish to write the Green's function for a different $\pi$-width range of arguments given by $\chi<\arg(z_0-z)<\chi+\pi$. Let $L^{(\sigma)}_\chi$ be such a contour so that integral representation of the Green's function is valid for such $z$ and $z_0$. Again, the exact contour does not matter, as long as $\theta_0=\arg\sigma -\chi$, $\theta\infty=-\chi$.
Then for this $\pi$-width range of arguments, we have
 \begin{equation}
G(\zeta,z)=-\frac{1}{4\pi}\int_{L^{(\sigma)}_\chi} \ee^{\ci t(z-\zeta)-\ci \sigma\overline{(z-\zeta)}/t}\frac{\dd t}{t}.
\end{equation}
Note that we may chose $L^{(\sigma)}_\chi$ to be the rotation through an angle of $-\chi$ of the contour $L^{(\sigma)}_0$.

\subsection{Transform pair}

Let $\Omega$ be a bounded convex domain in the complex plane. Let $z\in \Omega$. Lemma \ref{L:2} gives a partition $\{I_1,\dots,I_4\}$ of the boundary of $\Omega$. Using the notation established in section 2, we have the following definition.

\begin{defi}\label{D:spectral-jk}
Let $\Omega$ be a bounded convex domain in $\mathbb C$  and let $\phi$ be a $C^2$ function in some neighborhood of $\Omega$ and a solution to the complex Helmholtz equation. The \textbf{spectral functions $\{\rho_{j}(t) : t\in\mathbb C\setminus\{0\}\}_{1\leq j\leq 4}$} are defined as
\begin{equation}\label{D:spectral-jk}
\rho_j(t)=\int_{I_j}\ee^{-\ci t\zeta + \ci \sigma \overline \zeta/t}\Big[\phi(\zeta)(\ci \sigma/t) \dd \overline \zeta+\frac{\partial \phi(\zeta)}{\partial z}\dd \zeta\Big], \qquad t\in\mathbb C\setminus\{ 0\}, \quad j=1,\ldots, 4.
\end{equation}
\end{defi}
 
\begin{lem}\label{L:5}
With same notations and assumptions as above, we have
\begin{equation}\label{E:global-relation}
\sum_{j=1}^4\rho_{j}(t)=0\quad t\in\mathbb C\setminus\{0\},\quad j=1,\ldots, 4.
\end{equation}
\end{lem}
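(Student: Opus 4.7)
The plan is to recognize the sum of spectral functions as a single line integral over $\partial\Omega$ and identify it with the boundary term produced by the complex form of Green's second identity (already recorded in Section~7) applied to $\phi$ against a carefully chosen auxiliary function. Since the arcs $I_1,\ldots,I_4$ partition $\partial\Omega$ and inherit a consistent orientation from it, summing Definition~1 gives
\begin{equation*}
\sum_{j=1}^{4} \rho_j(t) \;=\; \int_{\partial\Omega} \ee^{-\ci t\zeta + \ci \sigma \bar\zeta /t}\Big[\phi(\zeta)\,\frac{\ci\sigma}{t}\,\dd\bar\zeta \;+\; \frac{\partial\phi(\zeta)}{\partial\zeta}\,\dd\zeta\Big],
\end{equation*}
so the task reduces to showing that this single boundary integral vanishes for every $t\in\mathbb{C}\setminus\{0\}$.

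The auxiliary function I would introduce is
\begin{equation*}
\psi(\zeta) \;:=\; \ee^{-\ci t\zeta + \ci\sigma\bar\zeta/t}, \qquad t\in\mathbb{C}\setminus\{0\}.
\end{equation*}
A direct Wirtinger calculation gives $\psi_\zeta=-\ci t\,\psi$ and $\psi_{\bar\zeta}=(\ci\sigma/t)\,\psi$, hence $\psi_{\zeta\bar\zeta}=\sigma\psi$, i.e.\ $\Delta\psi = 4\sigma\psi$. Thus $\psi$ is a smooth global solution of the complex Helmholtz equation. Substituting $\psi_{\bar\zeta}=(\ci\sigma/t)\psi$ into the identity
\begin{equation*}
\iint_\Omega \big[\phi\,\Delta\psi - \psi\,\Delta\phi\big]\,\dd A \;=\; 2\ci\int_{\partial\Omega}\Big[\frac{\partial\phi}{\partial\zeta}\,\psi\,\dd\zeta \;+\; \phi\,\frac{\partial\psi}{\partial\bar\zeta}\,\dd\bar\zeta\Big]
\end{equation*}
makes the right-hand side equal to $2\ci\sum_{j=1}^{4}\rho_j(t)$. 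Since both $\phi$ and $\psi$ satisfy $\Delta u = 4\sigma u$, the integrand on the left cancels pointwise, $\phi\,\Delta\psi-\psi\,\Delta\phi = 4\sigma\phi\psi-4\sigma\psi\phi = 0$, and the identity forces $\sum_{j=1}^{4}\rho_j(t)=0$.

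The one place where care is needed, and which I view as the main (though routine) obstacle, is justifying the application of Green's second identity at the stated level of regularity. The hypothesis $\phi\in C^{2}$ on a neighborhood of $\overline{\Omega}$, together with the real-analyticity of $\psi$, ensures the integrands are as smooth as one could want; the concern is the boundary $\partial\Omega$. Since $\Omega$ is bounded and convex, $\partial\Omega$ is Lipschitz (in particular rectifiable), and the complex Green's identity extends to this setting by a standard exhaustion argument: pick smooth convex subdomains $\Omega_n\uparrow\Omega$ with $C^2$ boundaries, apply the identity on each $\Omega_n$, and pass to the limit using uniform continuity of $\phi$, $\nabla\phi$ and $\psi$ up to $\partial\Omega$. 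Once this is granted, the argument above delivers the global relation for every $t\in\mathbb{C}\setminus\{0\}$.
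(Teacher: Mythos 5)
Your proposal is correct and follows essentially the same route as the paper's own proof in Appendix A: introduce $\psi(\zeta)=\ee^{-\ci t\zeta+\ci\sigma\bar\zeta/t}$, verify $\Delta\psi=4\sigma\psi$ and $\psi_{\bar\zeta}=(\ci\sigma/t)\psi$, and apply the complex form of Green's second identity so the area integrand cancels. Your added remark on justifying Green's identity on the Lipschitz boundary of a convex domain via exhaustion is a detail the paper leaves implicit, but it does not change the argument.
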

 We call \eqref{E:global-relation} the {\bf global relation}. Proof is presented in the Appendix \ref{appendixA}.

\begin{thm}\label{TransformCHE}
Let $\Omega$ be a bounded convex domain in $\mathbb C$  and let $\phi$ be a solution to the complex Helmholtz equation that is $C^2$ in some neighborhood of $\Omega$. Then, with same notations as above, we have that
\begin{equation}\label{E:repr-fh}
\displaystyle{\phi(z)=\frac{1}{2\pi \ci}\sum_{j=1}^4\int_{L^{(\sigma)}_{\beta_j}} \ee^{\ci t z- \ci\sigma \overline z/t}\rho_j(t)\frac{\dd t}{t}}.
\end{equation}
\end{thm}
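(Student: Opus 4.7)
The approach mirrors the proof of the earlier transform pair (Theorem 1): start from the Green's representation, decompose the boundary integral along the partition $\{I_j\}$ of $\partial\Omega$, insert the appropriate integral representation of the Green's function on each piece, and swap the order of integration to recognize the spectral functions $\rho_j(t)$. The geometric Lemma \ref{L:3} / Corollary \ref{C:argument} plays the role of guaranteeing that the contour $L^{(\sigma)}_{\beta_j}$ is the correct one on $I_j$, exactly as it guaranteed absolute convergence of $\int_0^\infty h_{z,\zeta}(t)\dd t$ in the $\overline{\partial}$ case.

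\medskip

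\noindent\textbf{Step 1 (Set-up).} Fix $z\in\mathrm{int}(T)$ and apply \eqref{(7.7)} and split the boundary integral as $\int_{\partial\Omega}=\sum_{j=1}^4\int_{I_j}$. For $\zeta\in I_j$, Corollary \ref{C:argument} gives $0<\Arg\big(\Psi_j(z)-\Psi_j(\zeta)\big)<\pi$, i.e.\ $\beta_j<\Arg(z-\zeta)<\beta_j+\pi$. This is precisely the $\pi$-width range of arguments for which the contour $L^{(\sigma)}_{\beta_j}$ yields the valid representation
\begin{equation*}
G(\zeta,z)=-\frac{1}{4\pi}\int_{L^{(\sigma)}_{\beta_j}}\ee^{\ci t(z-\zeta)-\ci\sigma\overline{(z-\zeta)}/t}\,\frac{\dd t}{t},
\end{equation*}
with the integrand decaying at both endpoints $t=0$ and $t=\infty$ of $L^{(\sigma)}_{\beta_j}$.

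\medskip

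\noindent\textbf{Step 2 (Differentiate and substitute).} Differentiating under the integral sign in $\overline\zeta$ yields $\partial_{\overline\zeta}G(\zeta,z)=-\frac{1}{4\pi}\int_{L^{(\sigma)}_{\beta_j}}\ee^{\ci t(z-\zeta)-\ci\sigma\overline{(z-\zeta)}/t}\,(\ci\sigma/t)\,\dd t/t$. Inserting both representations into \eqref{(7.7)} gives
\begin{equation*}
\phi(z)=-\frac{\ci}{2\pi}\sum_{j=1}^4\int_{I_j}\!\!\int_{L^{(\sigma)}_{\beta_j}}\!\!\ee^{\ci t(z-\zeta)-\ci\sigma\overline{(z-\zeta)}/t}\!\left[\phi(\zeta)\frac{\ci\sigma}{t}\dd\overline\zeta+\frac{\partial\phi(\zeta)}{\partial\zeta}\dd\zeta\right]\frac{\dd t}{t}.
\end{equation*}

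\medskip

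\noindent\textbf{Step 3 (Fubini, factor, recognize).} Invoking Fubini's theorem, factor out the $z$-dependent exponential $\ee^{\ci tz-\ci\sigma\overline z/t}$ (which does not depend on $\zeta$) and move the inner integration over $I_j$ inside:
\begin{equation*}
\phi(z)=-\frac{\ci}{2\pi}\sum_{j=1}^4\int_{L^{(\sigma)}_{\beta_j}}\ee^{\ci tz-\ci\sigma\overline z/t}\left[\int_{I_j}\ee^{-\ci t\zeta+\ci\sigma\overline\zeta/t}\!\left(\phi(\zeta)\frac{\ci\sigma}{t}\dd\overline\zeta+\frac{\partial\phi(\zeta)}{\partial \zeta}\dd\zeta\right)\right]\frac{\dd t}{t}.
\end{equation*}
The bracketed expression is exactly $\rho_j(t)$ from Definition \ref{D:spectral-jk}, and $-\ci/(2\pi)=1/(2\pi\ci)$, delivering \eqref{E:repr-fh}.

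\medskip

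\noindent\textbf{Main obstacle.} The real work is justifying Fubini and differentiation under the integral sign. Corollary \ref{C:argument} produces a uniform angle $\delta>0$ such that $\delta<\Arg(z-\zeta)-\beta_j<\pi-\delta$ for $\zeta\in I_j$; combined with $\mathrm{Re}\,\sigma\ne 0$ and $\sigma$ lying in the upper half-plane, this gives exponential decay of the integrand of $G(\zeta,z)$ along $L^{(\sigma)}_{\beta_j}$ near both $t=0$ and $t=\infty$, with decay rate uniform in $\zeta\in I_j$ (the contour $L^{(\sigma)}_{\beta_j}$ is chosen precisely so that $-\mathrm{Im}[\ci t(z-\zeta)]$ and $-\mathrm{Im}[-\ci\sigma\overline{(z-\zeta)}/t]$ are simultaneously non-positive, with strict negativity controlled by $\delta$). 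Because $\phi$ and $\partial\phi/\partial\zeta$ are continuous on the compact set $\overline\Omega$, one obtains an $L^1(\dd|\zeta|\otimes|\dd t|)$ bound on the double integrand, which validates the swap and completes the proof.
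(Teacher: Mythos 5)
Your proposal is correct and follows essentially the same route as the paper's own proof in Appendix B: split the Green's representation \eqref{(7.7)} over the arcs $I_j$, use Corollary \ref{C:argument} to justify the rotated contour $L^{(\sigma)}_{\beta_j}$ on each arc, differentiate the Green's function representation in $\overline\zeta$, and apply Fubini to recognize the spectral functions $\rho_j(t)$. Your closing paragraph on the uniform exponential decay needed to justify Fubini is in fact more explicit than the paper, which simply invokes the theorem.
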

We refer to \eqref{D:spectral-jk} and {\eqref{E:repr-fh} as a {\bf transform pair}.} Proof is presented in the Appendix \ref{appendixB}.

\section{Discussion}

We have presented a new transform pair for Laplace's equation and for the complex Helmholtz equation in bounded convex domains. Our work extends the work of Fokas \& Kapaev \cite{FokasKapaev2003} for convex polygons, to arbitrary convex domains. The method was built upon Crowdy's \cite{Crowdy:2015} construction for Laplace's equation in circular domains. We analysed mixed boundary value problems in circular and elliptical domains with different boundary conditions imposed on different sections of the boundary. Our results were verified against the solutions presented by Shepherd \cite{Shepherd1937} and Crowdy \cite{Crowdy:2015} for the mixed boundary value problem on the unit disc.

The advantage of this study is that the new transform pairs and the approach presented in this paper can be algorithmically adapted to solve harmonic problems in bounded convex domains with mixed boundary conditions. We emphasise that, even though one can use conformal mapping techniques to analyse such problems, it becomes tricky or even impossible to find mappings for problems which involve mixed boundary conditions. We also note that this new approach can be also used to analyse boundary value problems for the biharmonic equation (which will involve solving for two analytic functions; Langlois \cite{Langlois1964}), as well as the complex Helmholtz equation as discussed in \S 7. 

For future work, we aim to adapt the method for solving mixed boundary value problems in multiply connected domains. To solve such problems, one will need to construct transform pairs for non-convex domains, as well as to investigate the effect of having boundary components of different type (polygonal, circular or any smooth boundaries).

\section*{Acknowledgments}
The authors would like to thank the Isaac Newton Institute for Mathematical Sciences, Cambridge, for support and hospitality during the programme {\it Complex analysis: techniques, applications and computations} where part of the work on this paper was initiated. The authors acknowledge helpful discussions with N. Chalmoukis and M. Colbrook.

\section*{Funding}
This work was partly supported by EPSRC grant no EP/R014604/1 and NSF-DMS-1901978 (Lanzani).

\appendix

\section{Additions on the complex Helmholtz equation}

\subsection{Proof of Lemma \ref{L:5} \label{appendixA}}

 \begin{proof}
 First, set
 \begin{equation}
\psi(\zeta) \coloneqq \ee^{- \ci t\zeta+\ci \sigma \overline \zeta/t}.
\end{equation}
 A couple computations show 
\begin{equation}
\Delta \psi=4\sigma \psi \quad \text{and} \quad \frac{\partial }{\partial \overline \zeta}\psi=\frac{\ci \sigma}{t}\psi.
\end{equation}
Indeed, setting $\zeta=x+\ci y$, we have
\begin{equation}
\begin{split}
\Delta \psi &= \frac{\partial^2}{\partial x^2}\ee^{-\ci t(x+\ci y)+\ci\sigma(x- \ci y)/t}+\frac{\partial^2}{\partial y^2}\ee^{-\ci t(x+\ci y)+\ci \sigma(x-\ci y)/t} \\
&= \frac{\partial}{\partial x}(-\ci t+\ci\sigma/t)\ee^{-\ci t(x+\ci y)+\ci \sigma(x-\ci y)/t}+\frac{\partial}{\partial y}(t+\sigma/t)\ee^{-\ci t(x+\ci y)+\ci \sigma(x-\ci y)/t} \\
&=(-\ci t+\ci \sigma/t)^2\ee^{-\ci t(x+\ci y)+\ci \sigma(x-\ci y)/t}+(t+\sigma/t)^2\ee^{-\ci t(x+\ci y)+\ci \sigma(x- \ci y)/t}\\&=((-\ci t+\ci \sigma/t)^2+(t+\sigma/t)^2)\psi \\
&=4\sigma \psi
\end{split}
\end{equation}
and
\begin{equation}
\frac{\partial }{\partial \overline \zeta}\psi=\frac{\partial }{\partial \overline \zeta}\ee^{-\ci t\zeta+\ci \sigma \overline \zeta/t}=\big[\frac{\partial }{\partial \overline \zeta}(-\ci t\zeta+\ci \sigma\overline \zeta /t)\big]\ee^{-\ci t\zeta+\ci \sigma \overline \zeta/t}=\frac{\ci \sigma}{t} \psi.
\end{equation}
Using these two computations above, the complex version of Green's second identity,  and $\Delta \phi=4\sigma \phi$ ($\phi$ is the solution to the Helmholtz equation), we have that
\begin{equation}
\begin{split}
\sum_{j=1}^4\rho_j(t)&=\int_{\partial \Omega } \psi(\zeta)\Big[\phi(\zeta)(\ci \sigma/t)\dd\overline \zeta+\frac{\partial \phi(\zeta)}{\partial \zeta} \dd\zeta\Big]\\
&=\int_{\partial \Omega } \Big(\frac{\partial}{\partial\overline \zeta}\psi(\zeta)\Big)\phi(\zeta) \dd\overline \zeta+\frac{\partial \phi(\zeta)}{\partial \zeta}\psi(\zeta) \dd\zeta\Big] \\
&=\frac{1}{2 \ci}\iint_\Omega\Big( \phi \Delta \psi-\psi\Delta\phi\Big) \dd A \\
&=\frac{1}{2 \ci}\iint_\Omega\Big( \phi 4\sigma \psi-\psi4\sigma\phi\Big) \dd A=0.
\end{split}
\end{equation} 
 \end{proof}

\subsection{Proof of Theorem \ref{TransformCHE} \label{appendixB}}

\begin{proof}
Given a point $z\in \Omega$, we inscribe a trapezoid $T$ as in Lemma \ref{L:2} that contains $z$. The vertices of $T$ partition the boundary of $T$ into 4 arcs, $I_j$, $1\leq j\leq 4$. For each $j$, we have the conformal affine map
\begin{equation}
\Psi_j(w)=\ee^{-\ci \beta_j}(w-\alpha).
\end{equation}
Corollary \ref{C:argument} states that for $(z,\zeta)\in\text{int}(T)\times I_j$
\begin{equation}
0<\arg(\Psi_j(z)-\Psi_j(\zeta))<\pi.
\end{equation}
A simple calculation shows
\begin{equation}
\ee^{-\ci \beta_j}(z-\zeta)=\Psi_j(z)-\Psi_j(\zeta),
\end{equation}
so it follows that
\begin{equation}
\beta_j<\arg(z-\zeta)<\beta_j+\pi,
\end{equation}
where $\beta_j\in[0,2\pi)$, $1\leq j\leq 4$ are angles of rotation determined by the trapezoid $T$ and $(z,\zeta)\in\text{int}(T)\times I_j$. We let $L^{(\sigma)}_0$ be a contour so that our integral representation for $G$ is valid for $0<\Arg(z-\zeta)<\pi$. For each $\beta_j$, $1\leq j\leq 4$, we may rotate $L^{(\sigma)}_0$ by $-\beta_j$ to get a contour $L^{(\sigma)}_{\beta_j}$ that is valid for $\beta_j<\arg(z-\zeta)<\beta_j+\pi$. Thus we get the following integral representation for $G$:
\begin{equation}
G(\zeta,z)=-\frac{1}{4\pi}\int_{L^{(\sigma)}_{\beta_j}} \ee^{\ci t(z-\zeta)-\ci \sigma\overline{(z-\zeta)}/t}\frac{\dd t}{t}
\end{equation}
that is valid when
\begin{equation}
\beta_j<\arg(z-\zeta)<\beta_j+\pi, \qquad 0\leq\Arg(\sigma)<\pi.
\end{equation}

If we let $\phi$ be the solution to the complex Helmholtz equation, then we have the following integral representation for $\phi$ that follows from Green's second identity:
\begin{equation}
\phi(z)=2 \ci \int_{\partial \Omega}\Big[\phi(\zeta)\frac{\partial G(\zeta,z)}{\partial \overline \zeta} \dd\overline \zeta+ G(\zeta,z)\frac{\partial \phi(\zeta)}{\partial \zeta} \dd\zeta\Big].
\end{equation}
A quick calculation shows that
\begin{equation}
\frac{\partial G(\zeta,z)}{\partial \overline \zeta}=-\frac{1}{4\pi}\int_{L^{(\sigma)}_{\beta_j}}\ee^{\ci t(z-\zeta)-\ci\sigma\overline{(z-\zeta)}/t}\frac{\ci\sigma}{t}\frac{\dd t}{t}.
\end{equation}
Now that we have a valid integral representation for $G$ and $\partial G/\partial \bar\zeta$ for each $I_j$, we can use Fubini's theorem  and see that
\begin{equation}
\begin{split}
\phi(z)&=2 \ci \sum_{j=1}^k \int_{I_j}\Big[\phi(\zeta)\frac{\partial G(\zeta,z)}{\partial \overline \zeta} \dd\overline \zeta+ G(\zeta,z)\frac{\partial \phi(\zeta)}{\partial \zeta}d\zeta\Big] \\
&=\frac{1}{2\pi \ci}\sum_{j=1}^4\int_{I_j}\int_{L_{\beta_j}^{(\sigma)}} \ee^{\ci t(z-\zeta)- \ci \sigma\overline{(z-\zeta)}/t}\big[\phi(\zeta)(\ci \sigma/t)\dd\overline \zeta+\frac{\partial\phi(\zeta)}{\partial \zeta}\dd\zeta\big]\frac{\dd t}{t} \\
&=\frac{1}{2\pi \ci}\sum_{j=1}^4\int_{L^{(\sigma)}_{\beta_j}} \ee^{\ci tz-\ci\sigma \overline z/t}\rho_j(t)\frac{\dd t}{t}.
\end{split}
\end{equation}
\end{proof}

\bibliography{Transformpair}
\bibliographystyle{plain}

\end{document}